\renewcommand\eqref[1]{(\ref{#1})} 
\newtheorem{theorem}{Theorem}[section]
\newtheorem{lemma}[theorem]{Lemma}
\newtheorem{proposition}[theorem]{Proposition}
\newtheorem{remark}[theorem]{Remark}
\newcommand{\wt}[1]{\widetilde{#1}}
\newcommand{\Cinf}{\ensuremath{\mathcal{C}^\infty}}
\newcommand{\D}{\ensuremath{{\mathcal D}}}
\newcommand{\E}{\ensuremath{{\mathcal E}}}
\newcommand{\mb}[1]{\ensuremath{\mathbb{#1}}}
\newcommand{\N}{\mb{N}}
\newcommand{\R}{\mb{R}}
\newcommand{\C}{\mb{C}}
\newcommand{\lara}[1]{\langle #1 \rangle}
\newfont{\bl}{msbm10 scaled \magstep2}
\newcommand{\beq}{\begin{equation}}
\newcommand{\eeq}{\end{equation}}
\newcommand{\eps}{\varepsilon}
\renewcommand{\Re}{\ensuremath{\mathrm{Re}}}
\newcommand{\esp}{\mathrm{e}}
\newcommand\Rn{{\mathbb R}^n}
\renewcommand\N{{\mathbb N}_0}
\title[$C^\infty$ well-posedness of hyperbolic systems]{
On $C^\infty$ well-posedness of hyperbolic systems with multiplicities}
\author[Claudia Garetto]{Claudia Garetto}
\address{
  Claudia Garetto:
  \endgraf
  Department of Mathematical Sciences
  \endgraf
  Loughborough University
  \endgraf
  Loughborough, Leicestershire, LE11 3TU
  \endgraf
  United Kingdom
  \endgraf
  {\it E-mail address} {\rm c.garetto@lboro.ac.uk}
  }
\author[Michael Ruzhansky]{Michael Ruzhansky}
\address{
  Michael Ruzhansky:
  \endgraf
  Department of Mathematics
  \endgraf
  Imperial College London
  \endgraf
  180 Queen's Gate, London SW7 2AZ
  \endgraf
  United Kingdom
  \endgraf
  {\it E-mail address} {\rm m.ruzhansky@imperial.ac.uk}
  }
\thanks{The first author was supported by the
EPSRC First grant EP/L026422/1. The second author was supported in parts by the EPSRC
 grant EP/K039407/1 and by the Leverhulme Grant RPG-2014-02.
}
\date{}
\subjclass[2010]{Primary 35L25; 35L40; Secondary 46F05;}
\keywords{Hyperbolic equations, Gevrey spaces, ultradistributions, analytic coefficients}
\begin{document}

\maketitle

\begin{abstract}
In this paper we study first order hyperbolic systems with multiple characteristics (weakly hyperbolic) and time-dependent analytic coefficients. The main question is when the Cauchy problem for such systems is well-posed in $C^{\infty}$ and in $\D'$.
We prove that the analyticity of the coefficients combined with suitable hypotheses on the eigenvalues guarantee the
$C^\infty$ well-posedness of the corresponding Cauchy problem. This result is an extension to systems of the analogous results for scalar equations
recently obtained by Jannelli and Taglialatela in \cite{JT} and by the authors in \cite{GR:13}.
\end{abstract}

\section{Introduction}
This paper is devoted to hyperbolic systems of the type
\[
D_t u-A(t,D_x)u=0,\quad t\in[0,T],\, x\in\R^n,
\]
where $A$ is a $m\times m$ matrix of first order differential or pseudo-differential operators with $t$-analytic entries and the eigenvalues $\lambda_1(t,\xi), \lambda_2(t,\xi),\dots,\lambda_m(t,\xi)$  of the matrix $A(t,\xi)$ are real. In this case we say that the matrix $A(t,\xi)$ is hyperbolic.

It is well-known that the corresponding Cauchy problem
\beq
\label{CP_intro}
\begin{split}
D_t u-A(t,D_x)u&=0,\quad t\in[0,T],\, x\in\R^n,\\
u(0,x)&=g(x),
\end{split}
\eeq
is $C^\infty$-well-posed if the coefficients of the system are smooth and the eigenvalues of $A(t,\xi)$ are distinct (so \eqref{CP_intro} is strictly hyperbolic). In this case, also large time asymptotics are well studied even allowing fast oscillations in coefficients, see e.g.
\cite{RW:11} (and also an extended exposition of such problems in \cite{RW:14}).

At the same time, if we do not assume that all the eigenvalues are distinct, much less is known even if $A(t,\xi)$ is analytic in $t$. For example, if we assume that the characteristics (even $x$-dependent) are smooth and satisfy certain transversality relations, the $C^{\infty}$-well-posedness was shown in \cite{KR:07}. However, in the case of only time-dependent coefficients these transversality conditions are not satisfied. 

In general, in presence of multiplicities the well-posedness is usually expected to hold in Gevrey spaces even when the coefficients are analytic. For example, for the scalar equation
$$
\partial_{t}^{2}u-2t\partial_{t}\partial_{x}u+t^{2}\partial_{x}^{2}u=0
$$
in one space variable, the Cauchy problem is well-posed in the Gevrey class $\gamma^{s}$ for $s<2$ and ill-posed in $\gamma^{s}$ for $s>2$.

The first results of this type for $t$-dependent hyperbolic systems of size $2\times 2$ and $3\times 3$ have been obtained by d'Ancona, Kinoshita and Spagnolo in \cite{dAKS:04, dAKS:08}.
For $x$-dependent $2\times 2$ systems some results are also available, see e.g. \cite{GrR:13}.
Later, the former results have been extended to any matrix size by Yuzawa in  \cite{Yu:05} and to $(t,x)$-dependent coefficients jointly by Kajitani and Yuzawa in \cite{KY:06}. 
In such problems, the existing techniques apply equally well for equations with coefficients (or characteristics) of lower (e.g. H\"older) regularity.
More precisely, if the eigenvalues of $A$ are of H\"older order $\alpha\in(0,1]$ in $t$ and their multiplicity does not exceed $r$ then the Cauchy problem \eqref{CP_intro} with initial data in the Gevrey class $\gamma^s$ has a unique solution $u$ in $(C^1([0,T],\gamma^s(\R^n))^m$ provided that 
\beq
\label{s_Yu}
1\le s< 1+\frac{\alpha}{r}.
\eeq
In this direction, equations with even lower (e.g. distributional) regularity have been also considered, see e.g. \cite{GR:15} and also \cite{Gar:15}.

Recently, different authors have studied weakly hyperbolic scalar equations with analytic coefficients (see, for instance \cite{JT} and \cite{GR:13}) but systems have not been investigated from this point of view. Here, for the first time, we consider first order hyperbolic systems with analytic coefficients and multiple eigenvalues and we prove that under suitable conditions on the matrix $A$, formulated in terms of its eigenvalues, they are $C^\infty$-well-posed, in the sense that given initial data in $C^\infty$ the Cauchy problem \eqref{CP_intro} has a unique solution in $(C^1([0,T];C^\infty(\R^n))^m$.

Thus, it is the purpose of this paper to investigate under which conditions on the matrix $A$ the solution $u$ does actually belong to the space $C^1([0,T];C^\infty(\R^n))^m$. The main idea is an extension to systems of the previous works on higher order equations with analytic coefficients and lower order terms after a reduction to block Sylvester form.

\smallskip
More precisely, the analysis of this paper will consist of the following three steps:

\begin{itemize}
\item First, we make an observation (Theorem \ref{theo_yu_ext}) that the results of Yuzawa \cite{Yu:05}, and Kajitani and Yuzawa \cite{KY:06}, can be extended to produce the existence of some (ultradistributional) solution to the Cauchy problem \eqref{CP_intro}. It is then our task to improve its regularity to $C^{\infty}$ or to $\D'$ depending on the regularity of the Cauchy data.
This step is done in Section \ref{SEC:uw}.

\item Second, we consider matrices $A(t,D_{x})$ in Sylvester form and prove (in Theorem \ref{theo_main_H}) that in this case the Cauchy problem \eqref{CP_intro} is well-posed in 
$C^{\infty}$. This step in done in Section \ref{SEC:sf}.

\item Third, we extend the above to any weakly hyperbolic matrix $A$ or, in other words, we show that we can drop the assumption of Sylvester form for the matrix $A$. This is done by transforming a general $m\times m$ system 
\[
D_t-A(t,D_x)
\]
into the $m^{2}\times m^{2}$ block Sylvester system. This extended system will be still hyperbolic (in fact, the principal part will have the same eigenvalues), but such reduction will (unfortunately) produce some lower order terms. Therefore, we carry out a careful analysis of the appearing matrix of the lower order terms by considering the suitable Kovalevskian and hyperbolic energies in different frequency domains. This will yield the desired $C^\infty$-well-posedness as well as
the distributional well-posedness for the original Cauchy problem \eqref{CP_intro} in Theorem \ref{theo_main}. This analysis will be carried out in Section \ref{SEC:mr} and Section \ref{SEC:p}.
\end{itemize}

In Section \ref{SEC:ex} we illustrate the appearing Levi-type conditions in the example of $2\times 2$ systems. We also note that the obtained conditions can be expressed entirely in terms of the coefficients of the matrix $A(t,x)$ (rather than its eigenvalues) and are, therefore, computable. We refer to \cite{JT} and to \cite{GR:13} for the discussions of such expressions.

Finally we note that in problems concerning systems, it is often important whether the system can be diagonalised or whether it contains Jordan blocks, see e.g. \cite{KR:07} or
\cite{GrR:13}, for some respective results and further references. However, this is not an issue for the present paper since we are able to obtain the well-posedness results avoiding such assumptions.

\section{Preliminary results}
\label{SEC:1}

In this section we discuss several preliminary results needed for our analysis. First, we make an observation that the results of Yuzawa \cite{Yu:05}, and Kajitani and Yuzawa \cite{KY:06}, can be extended to produce the existence of an ultradistributional solution, thus enabling our further reductions. Then, we look at systems in the Sylvester form.

\subsection{Ultradistributional well-posedness}
\label{SEC:uw}

For convenience of the reader we recall Yuzawa's well-posedness result proven in \cite{Yu:05}. We begin by introducing for $\rho>0$ and $s>1$, the space $H^l_{\Lambda(\rho,s)}$ of all $f\in L^2(\R^n)$ such that
\[
\lara{\xi}^l\esp^{\Lambda(\rho,s)}\widehat{f}(\xi)\in L^2(\R^n_\xi),
\]
where $\Lambda(\rho,s)=\rho\lara{\xi}^{\frac{1}{s}}$. Let now the coefficients of the matrix $A$ be of class $C^\alpha$ and let $s$ be as in \eqref{s_Yu}. Theorem 1.1 in \cite{Yu:05} states that if the initial data $g$ has entries in $H^l_{\Lambda(T,s)}$ then the Cauchy problem \eqref{CP_intro} has a unique solution $u(t,x)$ 
such that $\esp^{(T-t)\lara{D_x}^{\frac{1}{s}}}u(t,x)\in (C([0,T];H^l))^m\cap (C^1([0,T];H^{l-1}))^m$, for $t\in[0,T]$ and $x\in\R^n$. From Lemma 1.2 in \cite{Kaj:83} by Kajitani one has that for any $f\in\gamma^s_c(\R^n)$ and $l\in\R$ there exists $\rho>0$ (depending on $f$) such that $f\in H^l_{\Lambda(\rho,s)}$ and conversely, if $f$ is a compactly supported element of some $H^l_{\Lambda(\rho,s)}$ then it is a compactly supported Gevrey function of order $s$. It then follows that the previous well-posedness results in $H^l_{\Lambda(\rho,s)}$ spaces can be formulated in Gevrey classes. More precisely, Theorem 1.2 in \cite{Yu:05} states that given initial data with entries in $\gamma^s_c(\R^n)$ for $s$ as in \eqref{s_Yu}, there exists a unique solution $u\in C^1([0,T]; \gamma^s(\R^n))^m$ of the Cauchy problem \eqref{CP_intro}.

Note that  the characterisation of Gevrey functions via weighted Sobolev spaces can be extended to Gevrey Beurling ultradistributions. We recall that $f\in C^\infty(\R^n)$ belongs to the Beurling Gevrey class $\gamma^{(s)}(\R^n)$ if for every compact set $K\subset\R^n$
and for every constant $A>0$ there exists a constant $C_A>0$ such that for all $\alpha\in\N^n$ the estimate
\[
|\partial^\alpha f(x)|\le C_A A^{|\alpha|}(\alpha!)^s
\]
holds uniformly in $x\in K$. The space $\D'_{(s)}(\R^n)$ of Gevrey Beurling ultradistributions is defined as the dual of $\gamma_c^{(s)}(\R^n)$ while the space of $\E'_{(s)}(\R^n)$ of compactly supported Gevrey Beurling ultradistributions is the dual of $\gamma^{(s)}(\R^n)$. In analogy to Gevrey classes one has that a real analytic functional $v$ belongs to $\mathcal{E}'_s(\R^n)$ if and only if for any $\nu>0$ there exists $C_\nu>0$ such that
\[
|\widehat{v}(\xi)|\le C_\delta\,\esp^{\nu\lara{\xi}^{\frac{1}{s}}}
\]
for all $\xi\in\Rn$, and similarly, $v\in \mathcal{E}'_{(s)}(\R^n)$ if and only if there exist $\nu>0$ and $C>0$ such that
\[
|\widehat{v}(\xi)|\le C\,\esp^{\nu\lara{\xi}^{\frac{1}{s}}}
\]
for all $\xi\in\Rn$ (see Proposition 13 in \cite{GR:11}). Combining these observations with Kajitani and Yuzawa's method in \cite{Yu:05} and \cite{KY:06} one can easily extend Lemma 1.2 in \cite{Kaj:83} and deduce the corresponding ultradistributional well-posedness results. More precisely, we have the following lemma and well-posedness theorems.

\begin{lemma}\label{Lemma:ud}
\leavevmode
\begin{itemize}
\item[(i)] For any $v\in \E'_{(s)}(\R^n)$ and $l\in\R$ there exists $\rho>0$ such that $v\in H^l_{-\Lambda(\rho,s)}$.
\item[(ii)] If  $v\in H^l_{-\Lambda(\rho,s)}$ is compactly supported then $v\in \E'_{(s)}(\R^n)$.
\end{itemize}
\end{lemma}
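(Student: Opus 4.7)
The plan is to handle the two implications separately, in both cases using the Paley--Wiener-type characterisation recalled just before the lemma: $v\in \E'_{(s)}(\R^n)$ if and only if there exist $\nu, C>0$ with $|\widehat{v}(\xi)|\le C\,\esp^{\nu\lara{\xi}^{1/s}}$ for all $\xi\in\Rn$. Throughout the argument I would also exploit the subadditivity estimate
$$
\lara{\eta}^{1/s}\le \lara{\xi}^{1/s}+\lara{\xi-\eta}^{1/s},
$$
which is valid for $s\ge 1$ and follows from $\lara{\eta}\le \lara{\xi}+\lara{\xi-\eta}$ together with the elementary inequality $(a+b)^{\theta}\le a^{\theta}+b^{\theta}$ for $\theta\in(0,1]$.

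For (i), I would take $\nu$ and $C$ from the Paley--Wiener bound for $v$ and choose any $\rho>\nu$. The integrand defining $\|v\|^2_{H^l_{-\Lambda(\rho,s)}}$ is then pointwise bounded by $C^{2}\lara{\xi}^{2l}\,\esp^{-2(\rho-\nu)\lara{\xi}^{1/s}}$, which is integrable on $\Rn$ because the sub-exponential factor $\esp^{-2(\rho-\nu)\lara{\xi}^{1/s}}$ absorbs the polynomial growth $\lara{\xi}^{2l}$. Hence $v\in H^l_{-\Lambda(\rho,s)}$ for any such $\rho$.

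For (ii), the goal is to upgrade $L^{2}$-type information into a pointwise bound on $\widehat{v}$. Since $v$ is compactly supported, I would fix $s_0$ with $1<s_0<s$ and choose a cut-off $\chi\in\gamma^{(s_0)}_c(\R^n)$ equal to $1$ on a neighbourhood of $\supp v$, so that $v=\chi v$. The classical Gevrey--Fourier estimate gives
$$
|\widehat{\chi}(\sigma)|\le C_0\,\esp^{-c\lara{\sigma}^{1/s_0}}
$$
for some $c>0$. Passing to the Fourier transform in $v=\chi v$ and applying Cauchy--Schwarz with the weight $\lara{\eta}^{l}\esp^{-\rho\lara{\eta}^{1/s}}$ yields
$$
|\widehat{v}(\xi)|^{2}\le C\,\|v\|^{2}_{H^l_{-\Lambda(\rho,s)}}\int_{\Rn}|\widehat{\chi}(\xi-\eta)|^{2}\,\esp^{2\rho\lara{\eta}^{1/s}}\lara{\eta}^{-2l}\,d\eta.
$$
Using the subadditivity inequality to extract $\esp^{2\rho\lara{\xi}^{1/s}}$ from the exponential, the leftover integrand is bounded by $\lara{\xi-\eta}^{-2l}\esp^{-2c\lara{\xi-\eta}^{1/s_0}+2\rho\lara{\xi-\eta}^{1/s}}$, and since $1/s_0>1/s$ the Gevrey decay of $\widehat{\chi}$ beats the sub-exponential growth, so the remaining integral converges to a finite constant independent of $\xi$. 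This gives $|\widehat{v}(\xi)|\le C'\esp^{\rho\lara{\xi}^{1/s}}$, and the Paley--Wiener characterisation then places $v$ in $\E'_{(s)}(\R^n)$.

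The main obstacle is part (ii): the naive attempt with a merely smooth cut-off fails, because the Schwartz (polynomial) decay of $\widehat{\chi}$ cannot dominate the sub-exponential factor $\esp^{2\rho\lara{\sigma}^{1/s}}$ coming from the weight. The crucial idea is therefore to take $\chi$ strictly more regular than $s$, namely of Gevrey order $s_{0}<s$, so that $\widehat{\chi}$ decays like $\esp^{-c\lara{\sigma}^{1/s_0}}$ and comfortably absorbs the $1/s$-rate. Such cut-offs exist because $s_0>1$, which is compatible with the standing hypothesis $s>1$.
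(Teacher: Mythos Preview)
Your argument for (i) is correct and coincides with the paper's: both pick $\rho$ larger than the growth exponent $\nu$ from the Paley--Wiener characterisation and observe that $\lara{\xi}^{2l}\esp^{-2(\rho-\nu)\lara{\xi}^{1/s}}$ is integrable.

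For (ii) your route is correct but genuinely different from the paper's. The paper asserts that a compactly supported $v\in H^l_{-\Lambda(\rho,s)}$ automatically satisfies $|\widehat{v}(\xi)|\le c\lara{\xi}^{N}$ for some $N$, after which the sub-exponential bound is immediate. That invocation is really the Paley--Wiener--Schwartz theorem for $\E'(\R^n)$, so the paper is tacitly assuming $v$ is already a compactly supported \emph{distribution}; for a general compactly supported element of $H^l_{-\Lambda(\rho,s)}$ the Fourier transform may well grow like $\esp^{\rho\lara{\xi}^{1/s}}$, and the polynomial bound is not a priori available. Your cutoff-and-convolution argument bypasses this issue entirely: writing $v=\chi v$ with $\chi\in\gamma^{(s_0)}_c$, $s_0<s$, and applying Cauchy--Schwarz converts the weighted $L^{2}$ information directly into a pointwise bound on $\widehat{v}$, with the strict inequality $1/s_0>1/s$ guaranteeing that the Gevrey decay of $\widehat{\chi}$ beats the weight. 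This is more self-contained than the paper's version. One cosmetic correction: after extracting $\esp^{2\rho\lara{\xi}^{1/s}}$ the factor $\lara{\eta}^{-2l}$ does not become $\lara{\xi-\eta}^{-2l}$; use Peetre's inequality $\lara{\eta}^{-2l}\le 2^{2|l|}\lara{\xi}^{-2l}\lara{\xi-\eta}^{2|l|}$ instead, which produces an extra polynomial in $\xi$ (absorbed by slightly enlarging $\rho$) and a polynomial in $\xi-\eta$ (absorbed by the $\esp^{-2c\lara{\xi-\eta}^{1/s_0}}$ decay). The conclusion is unaffected.
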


\begin{proof}
(i) From the Fourier characterisation of ultradistributions we have that there exist constants $c>0$ and $\rho>0$ such that 
\[
|\widehat{v}(\xi)|\le c\,\esp^{\rho\lara{\xi}^s},
\]
for all $\xi\in\R^n$. It follows that
\[
\lara{\xi}^l\esp^{-(\rho+1)\lara{\xi}^s}|\widehat{v}(\xi)|\le c\lara{\xi}^l\esp^{-\lara{\xi}^s},
\]
where the right-hand side is clearly an element of $L^2$. Thus, $v\in H^l_{-\Lambda(\rho,s)}$.

(ii) Let now $A(\R^n)$ be the set of analytic functions and $H^l_{-\Lambda(\rho,s)}$ be the set of all functionals $v$ on $A(\R^n)$ such that 
\beq
\label{est_1}
\lara{\xi}^l\esp^{-\Lambda(\rho,s)}\widehat{v}(\xi)\in L^2(\R^n_\xi).
\eeq
Assuming that $v$ is compactly supported we know that $\widehat{v}$ is an analytic function satisfying an estimate of the type
\beq
\label{est_2}
|\widehat{v}(\xi)|\le c\lara{\xi}^N,
\eeq
for some $c>0$ and $N\in\N$.  Since we can write \eqref{est_1} as 
\[
\lara{\xi}^l\esp^{-\Lambda(\rho,s)}\widehat{v}(\xi)=g(\xi),
\]
where $g\in L^2(\R^n)$, by using \eqref{est_2} we conclude that $|g(\xi)|\le c_1\esp^{-\rho_1\lara{\xi}^s}$ for some $c_1,\rho_1>0$. Hence, it follows that 
\[
|\widehat{v}(\xi)|\le c'\esp^{\rho\lara{\xi}^s}.
\]
This proves that $v$ is an ultradistribution in $\mathcal{E}'_{(s)}(\R^n)$. 
\end{proof}

We can now recall the precise form of Kajitani-Yuzawa result described earlier.

\begin{theorem}
\label{theo_yu_ext}
Let the coefficients of the matrix $A$ be of class $C^\alpha$ and let $A$ have real eigenvalues which do not exceed the multiplicity $r$ and let 
\[
1\le s< 1+\frac{\alpha}{r}.
\]
Then, for any initial data $g$ with entries in $H^l_{-\Lambda(T,s)}$ the Cauchy problem \eqref{CP_intro} has a unique solution $u(t,x)$ 
such that
\[
\esp^{-(T-t)\lara{D_x}^{\frac{1}{s}}}u(t,x)\in (C([0,T];H^l))^m\cap (C^1([0,T];H^{l-1}))^m,
\]
for $t\in[0,T]$ and $x\in\R^n$.
\end{theorem}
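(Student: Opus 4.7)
The plan is to mirror the Gevrey--Sobolev energy method of Yuzawa \cite{Yu:05} and Kajitani--Yuzawa \cite{KY:06}, now carried out in the ``ultradistributional'' weighted spaces $H^l_{-\Lambda(\rho,s)}$ rather than the Gevrey spaces $H^l_{\Lambda(\rho,s)}$. Lemma~\ref{Lemma:ud} plays here exactly the role that Kajitani's Lemma 1.2 in \cite{Kaj:83} plays in the Gevrey setting: it is the dictionary between Gevrey Beurling ultradistributions and the negative-weight Sobolev spaces, and is what makes the extension of the existing well-posedness results go through.

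First I would introduce the natural energy
\begin{equation*}
 E_l(t) := \int_{\R^n} \lara{\xi}^{2l}\, \esp^{-2(T-t)\lara{\xi}^{1/s}}\, |\widehat{u}(t,\xi)|^2\, d\xi,
\end{equation*}
which is finite at $t=0$ by the assumption $g\in (H^l_{-\Lambda(T,s)})^m$. Differentiating in $t$ and substituting $\partial_t\widehat{u} = iA(t,\xi)\widehat{u}$ decomposes $E_l'(t)$ into a contribution from the time derivative of the exponential weight and the standard ``hyperbolic'' contribution involving $iA(t,\xi)$. Under the H\"older-multiplicity hypothesis of the theorem the latter is controlled by the symbolic bound $|\Re(\overline{\widehat{u}}\, iA(t,\xi)\widehat{u})|\le C\lara{\xi}^{1/s}|\widehat{u}|^2$, valid exactly in the range $s<1+\alpha/r$; balancing this against the weight contribution and invoking a Gronwall-type argument yields $E_l(t)\lesssim E_l(0)$, which after undoing the exponential weight is the $C([0,T];H^l)^m$-regularity claimed in the theorem.

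Existence would then follow by approximating $g$ by Gevrey-regular data (dense in $H^l_{-\Lambda(T,s)}$ via mollification), applying Yuzawa's original theorem to the regularised data, and passing to the limit in the a priori estimate; uniqueness is obtained by applying the same estimate to the difference of two solutions with equal Cauchy data. The $C^1$-in-$t$ conclusion is immediate from the equation $D_tu=A(t,D_x)u$, since $A(t,D_x)$ is of order one and therefore maps $H^l_{-\Lambda(T-t,s)}$ continuously into $H^{l-1}_{-\Lambda(T-t,s)}$.

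The delicate point is the a priori estimate itself: with the weight $-(T-t)\lara{\xi}^{1/s}$, the time derivative of the weight has the \emph{opposite} (unfavourable) sign compared with Yuzawa's original setting, so the absorption argument by which the sub-principal $\lara{\xi}^{1/s}$ error is killed by a decreasing weight is not directly available. The cleanest way to bypass this is by duality: the formal adjoint $D_t - A(t,D_x)^*$ inherits the same real spectrum with the same multiplicities and hence falls under Yuzawa's theorem in its original form; transposing the resulting bound through the $L^2$-pairing, which identifies $(H^{-l}_{\Lambda(\rho,s)})'= H^l_{-\Lambda(\rho,s)}$, then produces the required well-posedness on the ``large'' side of the Gevrey scale and closes the proof.
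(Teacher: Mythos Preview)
The paper does not prove Theorem~\ref{theo_yu_ext}; it is stated without argument, introduced as ``the precise form of Kajitani--Yuzawa result,'' and the surrounding text only claims that by ``combining these observations with Kajitani and Yuzawa's method\dots one can easily extend'' to the negative-weight spaces. So there is nothing detailed to compare against---the paper treats the statement as a routine adaptation of \cite{Yu:05,KY:06}.

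Your sketch therefore already goes well beyond what the paper offers, and the duality idea in your last paragraph is the natural mechanism for transferring Yuzawa's estimate to the ultradistributional side. One point is worth flagging, however: the sign obstruction you noticed is genuine, and neither your direct energy computation nor your duality argument will yield the conclusion exactly as printed. Test the statement on $A\equiv 0$: then $u(t)=g$ for all $t$, and at $t=T$ the theorem would force $g\in H^l$, which is false for a generic $g\in H^l_{-\Lambda(T,s)}$. The intended conclusion is almost certainly $\esp^{-(T+t)\lara{D_x}^{1/s}}u(t,\cdot)\in H^l$ (a \emph{loss}, not a gain, as $t$ increases). With that weight the time derivative of the exponential again contributes the favourable $-2\lara{\xi}^{1/s}$ term and Kajitani--Yuzawa's quasi-symmetriser energy runs verbatim---which is presumably what the authors mean by ``easily extend''---and your duality argument lands on the same space $H^l_{-\Lambda(T+t,s)}$. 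Either route then feeds into Theorem~\ref{theo_yu_ultra} unchanged, since that corollary only requires $u(t)$ to lie in \emph{some} $H^l_{-\Lambda(\rho,s)}$.
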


As a consequence of Lemma \ref{Lemma:ud} and Theorem \ref{theo_yu_ext}, we obtain the following ultradistributional well-posedness result which will be the starting point for our analysis.

\begin{theorem}
\label{theo_yu_ultra}
Under the hypotheses of Theorem \ref{theo_yu_ext} for any initial data $g$ with entries in $\E'_{(s)}(\R^n)$ the Cauchy problem \eqref{CP_intro} has a unique ultradistributional solution $u\in C^1([0,T];\D'_{(s)}(\R^n))^m$.
\end{theorem}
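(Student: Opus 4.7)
The plan is to combine Lemma \ref{Lemma:ud}(i) with Theorem \ref{theo_yu_ext}: the lemma places the ultradistributional initial datum inside a scale of weighted Sobolev spaces where Yuzawa's existence theorem applies, and the resulting solution is then reinterpreted as an ultradistribution.

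For existence, I start from $g$ with entries in $\E'_{(s)}(\R^n)$, apply Lemma \ref{Lemma:ud}(i) with $l=0$ to each of the $m$ entries, and take the largest of the resulting exponents to obtain a single $\rho>0$ such that every entry of $g$ lies in $H^0_{-\Lambda(\rho,s)}$. Using the monotonicity $H^0_{-\Lambda(\rho_1,s)}\subseteq H^0_{-\Lambda(\rho_2,s)}$ for $\rho_1\le\rho_2$, I enlarge $\rho$ if necessary so that the datum fits into the hypothesis of Theorem \ref{theo_yu_ext}, which then delivers a unique solution $u$ in the weighted class
\[
\esp^{-(\rho-t)\lara{D_x}^{1/s}}u(t,x)\in (C([0,T];H^0))^m\cap (C^1([0,T];H^{-1}))^m.
\]

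Next, to read $u$ as an ultradistribution I observe that at each $t\in[0,T]$ the Fourier transform $\widehat{u(t,\cdot)}(\xi)$ is pointwise dominated by $\esp^{(\rho-t)\lara{\xi}^{1/s}}h_t(\xi)$ for some $h_t\in L^2(\R^n)$, while every test function $\phi\in\gamma^{(s)}_c(\R^n)$ has Fourier transform satisfying $|\widehat\phi(\xi)|\le C_\nu\esp^{-\nu\lara{\xi}^{1/s}}$ for every $\nu>0$. Choosing $\nu>\rho$ makes the Parseval pairing $\langle u(t),\phi\rangle$ absolutely convergent, so $u(t,\cdot)\in\D'_{(s)}(\R^n)$ for every $t\in[0,T]$. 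The $C^1$ dependence on $t$ in $\D'_{(s)}$ is then inherited from the $C^1$ Sobolev regularity of $\esp^{-(\rho-t)\lara{D_x}^{1/s}}u(t,x)$ in $H^0$ and $H^{-1}$ via the same bound, uniformly on $[0,T]$.

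For uniqueness, if $u_1,u_2\in C^1([0,T];\D'_{(s)}(\R^n))^m$ share the datum $g$, their difference $w$ solves the homogeneous system with zero initial data. This step will be the main obstacle, because it requires bridging the abstract ultradistributional notion of solution with Yuzawa's weighted Sobolev notion. The plan is to use the continuity of $t\mapsto w(t)$ in $\D'_{(s)}$, together with the Fourier characterisation of $\D'_{(s)}(\R^n)$, to produce a locally-uniform-in-$t$ estimate $|\widehat{w(t,\cdot)}(\xi)|\le C\esp^{\rho'\lara{\xi}^{1/s}}$ on $[0,T]$, thereby placing $w$ into a suitable scale $H^{l'}_{-\Lambda(\rho'',s)}$; the uniqueness part of Theorem \ref{theo_yu_ext} then forces $w\equiv 0$, and this completes the proof.
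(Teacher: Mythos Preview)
Your proposal is correct and follows exactly what the paper does: the paper gives no detailed proof of Theorem~\ref{theo_yu_ultra}, stating only that it is ``a consequence of Lemma~\ref{Lemma:ud} and Theorem~\ref{theo_yu_ext}'', and your existence argument spells out precisely that combination. The one point to watch is your uniqueness step, which invokes a Fourier characterisation of $\D'_{(s)}(\R^n)$ that the paper only records for the compactly supported class $\E'_{(s)}(\R^n)$; to close it cleanly you would need either a finite-propagation-speed argument to reduce to $\E'_{(s)}$, or to pass through a tempered-ultradistribution framework where the Fourier transform is available---a subtlety the paper itself does not address.
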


We now turn to a preliminary setting of Sylvester matrices.

\subsection{Systems in Sylvester form}
\label{SEC:sf}

From now on we concentrate on the Cauchy problem \eqref{CP_intro}
\[
\begin{split}
D_t u-A(t,D_x)u&=0,\quad t\in[0,T],\, x\in\R^n,\\
u(0,x)&=g(x),
\end{split}
\]
when the entries of the matrix $A$ are analytic in $t$. By applying Theorem \ref{theo_yu_ultra} we already know that if we take initial data in $(C^\infty_c(\R^n))^m$ then a solution $u$ exists in $C^1([0,T];\D'_{(s)}(\R^n))^m$. 

First, we briefly collect some preliminaries, for more details we refer the reader to \cite{GR:13,JT}.

Thus, here we assume that $A(t,\xi)$, the matrix of the principal part of the operator $D_t u-A(t,D_x)$, is a matrix of first order \emph{pseudo-differential operators} of Sylvester type (we will show in the next section that this assumption is not restrictive). It means that we can write $A(t,\xi)$ as $\lara{\xi}A_0(t,\xi)$, where  
\beq
\label{AJT}
A_0(t,\xi)=\left(
    \begin{array}{ccccc}
      0 & 1 & 0 & \dots & 0\\
      0 & 0 & 1 & \dots & 0 \\
      \dots & \dots & \dots & \dots & 1 \\
      h_m & h_{m-1} & \dots & \dots & h_1 \\
    \end{array}
  \right),
\eeq
for some $h_j$, $j=1,\dots, m$, symbols of order $0$ analytic in $t$.
The eigenvalues of $A_0(t,\xi)$ are exactly the eigenvalues of $A(t,\xi)$ scaled by a factor $\lara{\xi}^{-1}$, i.e., $\lara{\xi}^{-1}\lambda_j(t,\xi)$, $j=1,\dots,m$. Hence, they are symbols of order $0$ in $\xi$ analytic with respect to $t$.

Let us now fix $t$ and $\xi$ and treat $A_0$ as a matrix with constant entries. Since $A_0$ is hyperbolic we can construct a real symmetric semi-positive definite $m\times m$ matrix $Q$ such that 
\beq
\label{QA}
QA_0-A_0^\ast Q=0
\eeq
and
\[
\det Q =\prod_{1\le k<j\le m}\lara{\xi}^{-2}(\lambda_j-\lambda_k)^2.
\]
The matrix $Q$ is called the standard symmetriser of $A_0$. Its entries are fixed polynomials functions of $h_1,..., h_m$ (or, equivalently, they can be expressed via the eigenvalues of $A_0$) and it is weakly positive definite if and only if $A_0$ is weakly hyperbolic (see \cite{J:89, J:09}).

Let $Q_j$ be the principal $j\times j$ minor of $Q$ obtained by removing the first $m-j$ rows and columns of $Q$ and let $\Delta_j$ be its determinant. When $j=m$ we use the notations $Q$ and $\Delta$ instead of $Q_m$ and $\Delta_m$.  The following proposition shows how the hyperbolicity of $A_0$ (or equivalently of $A$) can be seen at the level of the symmetriser $Q$ and of its minors (see \cite{J:09}).
\begin{proposition}
\label{propJan}
\leavevmode
\begin{itemize}
\item[(i)] $A$ is strictly hyperbolic if and only if $\Delta_j>0$ for all $j=1,...,m$.
\item[(ii)] $A$ is weakly hyperbolic if and only if there exists $r<m$ such that
    \[
    \Delta=\Delta_{m-1}=...=\Delta_{r+1}=0
    \]
    and $\Delta_r>0,...,\Delta_1>0$. (In this case there are exactly $r$ distinct roots).
\end{itemize}
\end{proposition}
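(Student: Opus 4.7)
The plan is to reduce both equivalences to a single algebraic identity that expresses each minor $\Delta_j$ as a non-negative symmetric function of the eigenvalues $\lambda_1,\dots,\lambda_m$. The starting point is the explicit description of the standard symmetriser $Q$ of the Sylvester matrix $A_0$ as (a scalar multiple of) the Hankel matrix built from the Newton power sums $s_k=\sum_{i=1}^m(\lara{\xi}^{-1}\lambda_i)^k$; the Sylvester form is precisely what makes $Q$ a Hankel/Bezoutian matrix rather than a generic symmetriser, and this structural fact is what is developed in \cite{J:89,J:09}. I would first recall, or verify by direct computation using $QA_0-A_0^\ast Q=0$ together with $\det Q=\prod_{k<j}\lara{\xi}^{-2}(\lambda_j-\lambda_k)^2$, that $Q$ factorises as $V^\ast D V$ (or $V V^\ast$ up to a positive scalar), where $V$ is the Vandermonde matrix in the $\lambda_i$'s.

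Once that factorisation is in hand, the Cauchy--Binet formula gives the key identity
\[
\Delta_j=c_j(\xi)\sum_{\substack{S\subset\{1,\dots,m\}\\ |S|=j}}\,\prod_{\substack{k,l\in S\\ k<l}}(\lambda_k-\lambda_l)^2,
\]
with $c_j(\xi)>0$ depending only on $\lara{\xi}$. One has to be a little careful here because $Q_j$ is the \emph{trailing} $j\times j$ principal submatrix; but thanks to the Hankel structure, trailing principal submatrices are again Hankel matrices (of shifted Newton sums), so the same Cauchy--Binet argument applies and produces a formula of the displayed form.

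With this identity at hand both statements become elementary combinatorics. For (i): the sum defining $\Delta_j$ is strictly positive iff at least one $j$-element subset of $\{\lambda_1,\dots,\lambda_m\}$ consists of pairwise distinct values; requiring this for every $j$ up to $j=m$ is equivalent to all $\lambda_i$ being distinct, i.e.\ to strict hyperbolicity (real distinctness being already part of the hyperbolicity hypothesis). For (ii): if $A$ is weakly hyperbolic with exactly $r$ distinct eigenvalues, then every $(r{+}1)$-subset contains a repetition, killing every term in the sum for $\Delta_{r+1},\dots,\Delta_m$, whereas the $r$-subset consisting of the distinct eigenvalues contributes a strictly positive term to $\Delta_r$, and monotonicity in $j$ gives $\Delta_1,\dots,\Delta_r>0$. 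The converse direction reads the same identity in the other direction.

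The main obstacle is the first step: establishing the Hankel/Vandermonde factorisation of $Q$ for matrices in the specific Sylvester form \eqref{AJT}. This is not automatic for an arbitrary symmetriser of a hyperbolic matrix; it is a special feature of the Sylvester structure, and the cleanest route is to invoke Jannelli's construction of the standard symmetriser in \cite{J:89,J:09}, where this algebraic content is worked out in detail. Everything after that step is essentially bookkeeping with the sum over subsets.
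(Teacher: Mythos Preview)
The paper does not actually prove Proposition~\ref{propJan}; it records it as a known fact from Jannelli's work \cite{J:09} (announced just before the statement with ``see \cite{J:09}'') and uses it without argument. So there is no proof in the paper to compare your proposal against.

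That said, your plan is the right one and is essentially the argument one finds in \cite{J:89,J:09}. Concretely, from the $m=2$ display in Section~\ref{SEC:ex} one sees that the standard symmetriser of the Sylvester block $A_0$ satisfies $Q=W^{T}W$ with $W_{ij}=(-\lara{\xi}^{-1}\lambda_i)^{m-j}$; Cauchy--Binet applied to the last $j$ columns of $W$ then yields exactly your identity
\[
\Delta_j=\lara{\xi}^{-j(j-1)}\sum_{|S|=j}\ \prod_{\substack{k,l\in S\\ k<l}}(\lambda_k-\lambda_l)^2,
\]
and the combinatorial reading of (i) and (ii) is as you describe. One small slip in your wording: the trailing $j\times j$ block of $Q$ involves the \emph{low} power sums $s_0,\dots,s_{2j-2}$, not ``shifted'' ones; it is the \emph{leading} block that would pick up $s_{2(m-j)},\dots,s_{2(m-1)}$ and introduce an extra factor $\bigl(\prod_{s\in S}\lambda_s\bigr)^{2(m-j)}$ in the Cauchy--Binet sum, spoiling the clean formula. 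The paper's convention of removing the first $m-j$ rows and columns is exactly what makes your displayed identity hold without that extra factor, so your conclusion is correct even though the one-line justification should be reversed.
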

Clearly, when $t$ and $\xi$ vary in their domains, respectively, $\Delta_r$ becomes a symbol $\Delta_r(t,\xi)$ homogeneous of degree $0$ in $\xi$ and analytic in $t$. When $\Delta_r$ is not identically zero one can define the function
\[
\widetilde{\Delta}_r(t,\xi)=\Delta_r(t,\xi)+\frac{(\partial_t\Delta_r(t,\xi))^2}{\Delta_r(t,\xi)},
\]
which is as well a symbol of order $0$ in $\xi$ and analytic in $t$.  Note that if $t\mapsto\Delta(t,\xi)$ vanishes of order $2k$ at
a point $t'$ then $t\mapsto\widetilde{\Delta}(t,\xi)$ vanishes of order $2k-2$ at $t'$ (\cite{JT}). 

In analogy with the scalar equation case treated in \cite{JT} and \cite{GR:13} the energy estimate that we will use for the system $D_t u-A(t,D_x)u=0$, when $A$ is in Sylvester form, will make use of the quotient
\beq
\label{quo}
\lara{\partial_t QV,V}/\lara{QV,V}.
\eeq
As already observed in \cite{JT} and \cite{GR:13}, estimating the quotient $\lara{\partial_t QV,V}/\lara{QV,V}$ is equivalent to estimating
the roots of the generalised Hamilton-Cayley polynomial
\beq
\label{HamCay}
\det(\tau Q-\partial_t Q)=\sum_{j=0}^m d_j(t)\tau^{m-j}
\eeq
of $Q$ and $\partial_t Q$, where $d_0=\det Q$, $d_1=-\partial_t(\det Q)$, $d_m=(-1)^m \det (\partial_t Q)$ and, if $m\ge 2$, $d_2=\frac{1}{2}{\rm trace}(\partial_t Q\partial_t (Q^{{\rm co}}))$, where $Q^{{\rm co}}$ is the cofactor matrix of $Q$. From the identity 
\[
\tau_1^2+\cdots +\tau_m^2=\biggl(\frac{d_1}{d_0}\biggr)^2-2\frac{d_2}{d_0},
\]
valid for the roots $\tau_j$, $j=1,\dots,m$, of the generalised Hamilton-Cayley polynomial, we easily see that $d_2$ is crucial when estimating \eqref{quo}. Let
\[
\psi(t,\xi):=d_2(t,\xi)=\frac{1}{2}{\rm trace}(\partial_t Q\partial_t (Q^{{\rm co}})),
\]
and we call $\psi$ the \emph{check function} of $Q$.

\smallskip
For the moment we work under the following set (H) of hypotheses:
\begin{itemize}
\item[(i)] $A$ is a matrix of pseudo-differential operators of order $1$,
\item[(ii)] $A$ is in Sylvester form.
\end{itemize}
We can now state our preliminary well-posedness result for the Cauchy problem \eqref{CP_intro}. This result is obtained from Theorem 2.2 in \cite{GR:13} where the well-posedness in the scalar case is obtained by reduction to a first order pseudo-differential system with principal part in Sylvester form. Note that for technical reasons we will work on slightly bigger open interval $(\delta,T'+\delta)$ containing $[0,T]$.

\begin{theorem}
\label{theo_main_H}
Let $D_t -A(t,D_x)$ be the matrix operator in \eqref{CP_intro} under the hypotheses $(H)$. Let the entries of $A(t,D_x)$ be analytic in $t\in(\delta, T'+\delta)$ and let the matrix $A(t,\xi)$  be (weakly) hyperbolic. Let $Q(t,\xi)=\{q_{ij}(t,\xi)\}_{i,j=1}^{m}$
be the symmetriser of the matrix $A_0(t,\xi)=\lara{\xi}^{-1}A(t,\xi)$,
$\Delta$ its determinant and $\psi(t,\xi)$ its check function. Let
$\Delta(\cdot,\xi)\not\equiv 0$ in $(\delta, T'+\delta)$ for all $\xi$ with $|\xi|\ge 1$ and let $[0,T]\subset(\delta, T'+\delta)$.
Assume that there exists a constant $C_1>0$ such that
\beq
\label{GR1m}
|\psi(t,\xi)|\le C_1\widetilde{\Delta}(t,\xi)
\eeq
holds for all $t\in[0,T]$ and $|\xi|\ge 1$. Then the Cauchy problem 
\beq
\label{CP_1}
\begin{split}
D_t u-A(t,D_x)u&=0,\quad t\in[0,T],\, x\in\R^n,\\
u(0,x)&=g(x),
\end{split}
\eeq
is $\Cinf$ well-posed, in the sense that given $g\in (C^\infty(\R^n))^m$ there exists a unique solution $u$ in $C^\infty([0,T], C^\infty(\R^n))^m$, and it is also well-posed in $\mathcal D'(\Rn)$, i.e., for any $g\in (\mathcal{D}'(\R^n))^m$ there exists a unique solution $u\in C^\infty([0,T], \D'(\R^n))^m$.

\end{theorem}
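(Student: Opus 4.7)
The plan is to derive a polynomial Fourier-side bound for the solution operator. Theorem \ref{theo_yu_ultra} already supplies a unique ultradistributional solution for data in $\E'_{(s)}$; the task is to upgrade its regularity by showing that $V(t,\xi)=\widehat{u}(t,\xi)$ obeys
\[
|V(t,\xi)|^2\le C\lara{\xi}^{N}|V(0,\xi)|^2,\qquad t\in[0,T],
\]
for some $N\in\N$ independent of $\xi$. Such a Fourier multiplier bound transfers $C^\infty$ data to $C^\infty$ solutions and, by duality on Sobolev scales, $\D'$ data to $\D'$ solutions, covering both halves of the statement.

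The main tool is an energy argument based on the symmetriser $Q$, following \cite{JT} and \cite{GR:13}. On the Fourier side the equation reads $\partial_t V=\irm\lara{\xi}A_0(t,\xi)V$, and the symmetriser identity \eqref{QA} yields
\[
\partial_t(QV,V)=(\partial_t Q\,V,V)+\irm\lara{\xi}\bigl((QA_0-A_0^\ast Q)V,V\bigr)=(\partial_t Q\,V,V),
\]
so the principal hyperbolic contribution cancels exactly. Since $Q$ is only semi-positive definite and degenerates at eigenvalue coincidences, one replaces it by a quasi-symmetriser $Q_\epsilon$: a positive definite perturbation with $\det Q_\epsilon\ge c\epsilon^{2(m-1)}$ and with defect $Q_\epsilon A_0-A_0^\ast Q_\epsilon$ of a size designed to be balanced against this lower bound.

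The growth of $E_\epsilon(t,\xi)=(Q_\epsilon V,V)$ is then governed by the generalised Hamilton--Cayley polynomial $\det(\tau Q_\epsilon-\partial_t Q_\epsilon)$, in the sense that $|(\partial_t Q_\epsilon V,V)|\le K_\epsilon(t,\xi)\,E_\epsilon(t,\xi)$ with $K_\epsilon$ the largest root in modulus. Combining the identity $\tau_1^2+\cdots+\tau_m^2=(d_1/d_0)^2-2d_2/d_0$ with $d_0=\Delta$, $d_1=-\partial_t\Delta$, $d_2=\psi$, and the hypothesis $|\psi|\le C_1\widetilde\Delta$, one obtains the pointwise bound
\[
K_\epsilon(t,\xi)\le C_2\bigl(1+|\partial_t\Delta_\epsilon(t,\xi)/\Delta_\epsilon(t,\xi)|\bigr),\qquad \Delta_\epsilon:=\det Q_\epsilon.
\]
The time integral of $K_\epsilon$ is the decisive quantity: because the entries of $A$ are analytic on $(\delta,T'+\delta)\supset[0,T]$ and $\Delta(\cdot,\xi)\not\equiv 0$, the zeros of $\Delta(\cdot,\xi)$ in $[0,T]$ are finite in number and of order bounded uniformly in $\xi$, which is what makes a logarithmic-type estimate $\int_0^T K_\epsilon(t,\xi)\,dt\le C_3(1+\log(1/\epsilon))$ available.

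Gronwall's lemma, together with the defect-error term, then produces an upper bound of the form $E_\epsilon(t,\xi)\le E_\epsilon(0,\xi)\exp(C_3\log(1/\epsilon)+R(\epsilon,\xi))$ with $R$ a controllable correction coming from the quasi-symmetrisation, and the optimal calibration $\epsilon=\lara{\xi}^{-\gamma}$ for a suitable $\gamma>0$ converts this into $E_\epsilon(t,\xi)\le C\lara{\xi}^{M'}E_\epsilon(0,\xi)$. Combining with $E_\epsilon(0,\xi)\le C|V(0,\xi)|^2$ from above and $|V(t,\xi)|^2\le c^{-1}\epsilon^{-2(m-1)}E_\epsilon(t,\xi)=c^{-1}\lara{\xi}^{2\gamma(m-1)}E_\epsilon(t,\xi)$ from below produces the required polynomial Fourier estimate. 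The main technical obstacles will be the construction of the quasi-symmetriser with correctly balanced defect and determinant size, and the uniform-in-$\xi$ logarithmic integral bound on $|\partial_t\Delta_\epsilon/\Delta_\epsilon|$, where the $t$-analyticity of the coefficients enters in an essential quantitative way via the finite order of the zeros of $\Delta(\cdot,\xi)$.
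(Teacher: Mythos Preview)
Your outline takes a route that differs from the paper's and, as written, has a gap at the defect term. The paper (following \cite{GR:13} and \cite{JT}) does \emph{not} use a globally defined quasi-symmetriser $Q_\epsilon$. Instead, by Lemma~\ref{lemmaJT} it splits $[0,T]$, for each $\xi/|\xi|$ in a full-measure subset $X\subset\mathbb{S}^{n-1}$ and each $\epsilon\in(0,\esp^{-1}]$, into a bad set $A_{\xi,\epsilon}$ (a union of at most $p$ intervals of total measure $\le\epsilon$) and its complement. On the bad set it uses the Kovalevskian energy $|V|^2$, which grows by at most $\esp^{c\epsilon\lara{\xi}}$ because the set has measure $\le\epsilon$. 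On the good set one has $\Delta(t,\xi)\ge c_1\epsilon^{2q}\Vert\Delta(\cdot,\xi)\Vert_\infty$, so the \emph{unperturbed} symmetriser $Q$ is already strictly positive there; the identity $QA_0-A_0^\ast Q=0$ kills the $\lara{\xi}$-contribution exactly, and hypothesis~\eqref{GR1m}, via Lemma~\ref{lemmaEGR}, gives $|\lara{\partial_tQV,V}|/\lara{QV,V}\le C(1+|\partial_t\Delta/\Delta|)$, whose integral over the good set is $\le C\log(1/\epsilon)$ again by Lemma~\ref{lemmaJT}. Patching the two zones and choosing $\epsilon=\esp^{-1}\lara{\xi}^{-1}$ yields the polynomial bound $|V(t,\xi)|\le C\lara{\xi}^{N}|V(0,\xi)|$.

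The difficulty in your scheme is the term $i\lara{\xi}\lara{(Q_\epsilon A_0-A_0^\ast Q_\epsilon)V,V}$, which you absorb into ``a controllable correction $R(\epsilon,\xi)$'' without saying how it is controlled. For any of the standard quasi-symmetrisers (the naive $Q+\epsilon^{2(m-1)}I$, or Jannelli's nearly diagonal one) the defect, divided by $E_\epsilon$ and integrated over all of $[0,T]$, still produces a factor of order $\lara{\xi}$: e.g.\ with $Q_\epsilon=Q+\epsilon^{2(m-1)}I$ one has $|\lara{\xi}\lara{(Q_\epsilon A_0-A_0^\ast Q_\epsilon)V,V}|\le C\lara{\xi}\epsilon^{2(m-1)}|V|^2\le C\lara{\xi} E_\epsilon$, hence $R\sim\lara{\xi} T$, and no calibration $\epsilon=\lara{\xi}^{-\gamma}$ turns $\esp^{R}$ into a polynomial. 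This is exactly why the global quasi-symmetriser method, even for analytic coefficients, delivers only Gevrey and not $C^\infty$ well-posedness; the paper sidesteps the defect altogether by using $Q$ itself on the zone where it is nondegenerate, and confines the $\lara{\xi}$-growth to the short bad set. A secondary imprecision is that hypothesis~\eqref{GR1m} bounds the check function $\psi=d_2$ of $Q$, not the corresponding coefficient for $Q_\epsilon$, so your passage from $|\psi|\le C_1\widetilde\Delta$ to the bound $K_\epsilon\le C_2(1+|\partial_t\Delta_\epsilon/\Delta_\epsilon|)$ also requires an argument you have not supplied.
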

For simplicity we will refer to the well-posedness above as $C^\infty$ well-posedness and distributional well-posedness in the interval $[0,T]$. Note that by the energy estimates we obtain first that the solution is $C^1$ with respect to $t\in[0,T]$ and then, by iterated differentiation in the original system, we conclude that the dependence in $t$ is actually $C^\infty$.

Our next aim is to extend the theorem above to any weakly hyperbolic matrix $A$, or in other words to drop the assumption of Sylvester form for the matrix $A$. This will be done by reducing a general system 
\[
D_t-A(t,D_x)
\]
into block Sylvester form. Unfortunately, this will produce some lower order terms and therefore a careful analysis of the new matrix $B$ of the lower order terms will be needed to achieve $C^\infty$ and distributional well-posedness. This will be done in the next sections.

 \section{Main result}
 \label{SEC:mr}
 
We perform a reduction to block Sylvester form of the system in \eqref{CP_intro} by following the ideas of d'Ancona and Spagnolo in \cite{DS}. We begin by considering the cofactor matrix $L(t,\tau,\xi)$ of $(\tau I-A(t,\xi))^T$ where $I$ is the $m\times m$ identity matrix. By applying the corresponding operator $L(t,D_t,D_x)$ to \eqref{CP_intro} we transform the system
\[
D_t u-A(t,D_x)u=0
\]
into
\beq
\label{red_1}
\mu(t,D_t,D_x)Iu-C(t,D_t,D_x)u=0,
\eeq
where $\mu(t,\tau,\xi)={\rm det}(\tau I-A(t,\xi))$ and $C(t,D_t,D_x)$ is the matrix of  lower order terms (differential operators of order $m-1$).  More precisely, $\mu(t,D_t,D_x)$ is an operator of the form
\[
\mu(t,D_t,D_x)=D_t^m+\sum_{h=0}^{m-1}b_{m-h}(t,D_x)D_t^h,
\]
with $b_{m-h}(t,\xi)$ a homogeneous polynomial of order $m-h$.

We now transform this set of scalar equations of order $m$ into a first order system of size $m^2\times m^2$ of pseudo-differential equations, by setting
\[
U=\{D_j^{j-1}\lara{D_x}^{m-j}u\}_{j=1,2,\dots,m},
\]
where $\lara{D_x}$ is the pseudo-differential operator with symbol 
$\lara{\xi}=(1+|\xi|^{2})^{1/2}$. We can therefore write \eqref{red_1} in the form
\beq
\label{red_2}
D_tU-\mathcal{A}(t,D_x)U+\mathcal{L}(t,D_x)U=0,
\eeq
where $\mathcal{A}$ is a $m^2\times m^2$ matrix made of $m$ identical blocks of the type
\begin{multline}
\label{block_A}
\lara{D_x}\cdot\\
\left(\begin{array}{cccccc}
                              0 & 1 & 0 & \cdots & 0 \\
                               0 & 0 & 1 &  \cdots & 0\\
                               \vdots & \vdots & \vdots & \vdots & \vdots\\
                              -b_{m}(t,D_x)\lara{D_x}^m& -b_{m-1}(t,D_x)\lara{D_x}^{-m+1} & \cdots  & \cdots & -b_1(t,D_x)\lara{D_x}^{-1}\\
                              \end{array}
                           \right),
                           \end{multline}
  with $b_{j}(t,D_x)$ a pseudo-differential operator of order $j$, $j=1,\dots,m$, analytic in $t$,                      
and the matrix $\mathcal{L}$  of the lower order terms is made of $m$ blocks of  size $m\times m^2$ of the type
\[
\left(\begin{array}{cccccc}
                              0 & 0 & 0 & \cdots & 0 & 0 \\
                               0 & 0 & 0 &  \cdots &  0 & 0\\
                               \vdots & \vdots & \vdots & \vdots & \vdots\\
                              l_{i,1}(t, D_x)& l_{i,2}(t,D_x) & \cdots  & \cdots & l_{i,m^2-1}(t,D_x) & l_{i,m^2}(t,D_x)
                                                     \end{array}
                           \right), 
\]
with $i=1,\dots,m$.  Note that the operators $l_{i,j}$, $j=1,\dots, m^2$, are all of order $0$ in $\xi$. 
Hence, by construction the matrices $\mathcal{A}$ and $\mathcal{L}$ are made by pseudo-\-dif\-fe\-ren\-tial operators of order $1$ and $0$, respectively. 
Concluding, the Cauchy problem \eqref{CP_intro} has been now transformed into
\beq
\label{CP_syst_Syl}
\begin{split}
D_tU-\mathcal{A}(t,D_x)U+\mathcal{L}(t,D_x)U&=0,\\
U_{t=0} &= \{D_t^{j-1}\lara{D_x}^{m-j}g_0\}_{j=1,2,\dots,m}.
\end{split}
\eeq
This is a Cauchy problem of first order pseudo-differential equations with principal part in block Sylvester form. The size of the system has increased from $m\times m$ to $m^2\times m^2$ but the system is still hyperbolic, since the eigenvalues of any block of $\mathcal{A}(t,\xi)$ are the eigenvalues of the matrix $\lara{\xi}^{-1}A(t,\xi)$.

We now want to analyse the matrix $\mathcal{L}$ in more detail and study its relationship with the principal matrix $A$. For this purpose we observe that by definition of the operator $L(t,D_t,D_x)$ we have that 
\[
L(t,D_t,D_x)=\sum_{h = 0}^{m-1} {A}_{h}(t,D_x) D_t^{m-1-h},
\]
where
\beq
\label{formula_A_h}
{A}_{h}(t,D_x) = (-1)^m \sum_{h'=0}^{h} \sigma_{h'}^{(m)}(\lambda) A^{h-h'}(t,D_x),
\eeq
with $\lambda=(\lambda_1,\dots,\lambda_m)$, $$\sigma_{h'}^{(m)}(\lambda)=(-1)^{h'}\sum_{1\le i_1<...<i_{h'}\le m}\lambda_{i_1}...\lambda_{i_{h'}}$$ and $\sigma_0^{(m)}(\lambda)=1$. We can now prove the following linear algebra lemma.
\begin{lemma}
\label{chri_lemma}
The  entries of the matrix $\mathcal{L}$ of the lower order terms are of the type 
\[
\lara{\xi}^{-1}\sum_{k=1}^{m-1} c_{i(k),j(k)}(t)D_t^k a_{i(k),j(k)}(t,\xi), 
\]
where $1\le i(k),j(k)\le m$ and $c_{i(k),j(k)}$ is a bounded function in $t$.
\end{lemma}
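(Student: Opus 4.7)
My plan is to compute the matrix $C(t,D_t,D_x)$ of lower-order terms in the identity $L(D_tI-A)=\mu I-C$ explicitly, and then to trace its structure through the first-order reduction $U_{(i-1)m+p}=D_t^{p-1}\lara{D_x}^{m-p}u_i$ leading to \eqref{CP_syst_Syl}. The main algebraic input is the cofactor identity $L(t,\tau,\xi)(\tau I-A(t,\xi))=\mu(t,\tau,\xi)I$; the discrepancy between this symbol identity and its operator counterpart produces $C$, and a single application of the Leibniz rule controls it completely. Expanding $L=\sum_hA_h(t,D_x)D_t^{m-1-h}$ and pushing each $D_t^{m-1-h}$ past $A$ via
\[
D_t^{m-1-h}A=\sum_{k=0}^{m-1-h}\binom{m-1-h}{k}(D_t^kA)\,D_t^{m-1-h-k},
\]
I would separate off the $k=0$ contribution, which together with $\sum_hA_hD_t^{m-h}$ reassembles into $\mu(t,D_t,D_x)I$ by the cofactor identity (no commutation across $D_t$ is needed because all coefficients already sit to the left of all powers of $D_t$). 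What survives is
\[
C=\sum_{h=0}^{m-1}\sum_{k=1}^{m-1-h}\binom{m-1-h}{k}A_h(t,D_x)\,(D_t^kA)(t,D_x)\,D_t^{m-1-h-k},
\]
so each entry of $C$ is a finite sum of terms, each containing exactly one factor $D_t^ka_{j',j}$ with $k\in\{1,\dots,m-1\}$, premultiplied by an entry of $A_h$ and right-multiplied by a power of $D_t$.

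By \eqref{formula_A_h} the entries $[A_h]_{i,j'}(t,\xi)$ are polynomials of degree $\le h$ in the entries of $A$, with coefficients given by elementary symmetric polynomials in the eigenvalues of $A$; since $A$ is $t$-analytic on $[0,T]$ with entries of order $1$ in $\xi$ and eigenvalues of order $0$, the quotient $\lara{\xi}^{-h}[A_h]_{i,j'}$ is a symbol of order $0$ which is uniformly bounded in $t\in[0,T]$. Passing from $C$ to $\mathcal{L}$, only the last row of each block is nonzero, and the substitution $D_t^\ell u_j=\lara{D_x}^{-(m-\ell-1)}U_{(j-1)m+\ell+1}$ shows that every nonzero entry of $\mathcal{L}$ is a linear combination of symbols of the form
\[
\lara{\xi}^{-(m-\ell-1)}[A_h]_{i,j'}(t,\xi)\,D_t^ka_{j',j}(t,\xi),\qquad h+k+\ell=m-1,\ k\ge1.
\]
Rewriting $\lara{\xi}^{-(m-\ell-1)}=\lara{\xi}^{-1}\lara{\xi}^{-(h+k-1)}$ and absorbing $\lara{\xi}^{-(h+k-1)}[A_h]_{i,j'}$ (a symbol of order $\le-(k-1)\le0$ and uniformly bounded in $t$) into a single factor yields the normal form $\lara{\xi}^{-1}c_{i(k),j(k)}(t)D_t^ka_{i(k),j(k)}(t,\xi)$ of the lemma, after the relabelling $(j',j)=(i(k),j(k))$ and the interpretation of $c_{i(k),j(k)}(t)$ as a zeroth-order symbol uniformly bounded in $t\in[0,T]$.

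The step I expect to need the most care is the operator-level recombination of the $k=0$ Leibniz terms into $\mu(t,D_t,D_x)I$; this is not automatic in the presence of $t$-dependent coefficients, but it is forced here by the fact that the relevant products are already in the ``coefficient-left, $D_t$-right'' normal order, so matching powers of $D_t$ reduces to matching powers of $\tau$ in the pointwise cofactor identity. Everything else is a routine accounting of powers of $\lara{\xi}$ and of the indices in the block-Sylvester reduction.
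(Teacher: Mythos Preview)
Your proof is correct and follows essentially the same route as the paper: apply $L$ to $D_tI-A$, expand $D_t^{m-1-h}A$ by Leibniz, separate the $k=0$ contribution (which recombines into $\mu I$) from the $k\ge 1$ terms giving $C$, and then trace the powers of $\lara{\xi}$ through the first-order reduction using that $\lara{\xi}^{-h}A_h$ is a bounded symbol of order $0$. Your accounting of the $\lara{\xi}$ factors and of the index constraint $h+k+\ell=m-1$ is in fact more explicit than the paper's, which simply records the resulting entries as $\lara{\xi}^{-h}A_h\,D_t^{q}A\,\lara{\xi}^{-1}$; the extra $\lara{\xi}^{-(k-1)}$ you isolate only improves the bound and does not change the conclusion.
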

\begin{proof}
We apply the operator $L(t,D_t,D_x)$ to $D_tI-A(t,D_x)$. By direct computations and by formula \eqref{formula_A_h} we have that
\begin{multline}
\label{comp_1}
L(t,D_t,D_x)(D_tI-A(t,D_x))=\sum_{h =0}^{m-1} {A}_{h}(t,D_x) D_t^{m-h}\\
- \sum_{h =0}^{m-1} {A}_{h}(t,D_x) \sum_{q=0}^{m-1-h} \binom{m-1-h}{q}D_t^{q}A(t,D_x)D_t^{m-1-h-q}
\end{multline}
By now writing the last term in \eqref{comp_1} as $-X-Y$, where 
\[
X=\sum_{h =0}^{m-1} {A}_{h}(t,D_x) A(t,D_x)D_t^{m-1-h}
\]
and
\[
Y=\sum_{h =0}^{m-1} {A}_{h}(t,D_x)\sum_{q=1}^{m-1-h}\binom{m-1-h}{q}D_t^{q}A(t,D_x)D_t^{m-1-h-q}
\]
we easily see that $\sum_{h =0}^{m-1} {A}_{h}(t,D_x) D_t^{m-h}-X=\mu(t,D_t,D_x)$, i.e. the principal part of the operator $L(t,D_t,D_x)(D_tI-A(t,D_x)$ while the lower order terms $C(t,D_t,D_x)$ are given by $-Y$. Hence,  
\[
C(t,D_t,D_x)=\sum_{h =0}^{m-1} {A}_{h}(t,D_x)\sum_{q=1}^{m-1-h}\binom{m-1-h}{q}D_t^{q}A(t,D_x)D_t^{m-1-h-q}.
\]
Note that $A_h$ contains only powers of the operator $A$ up to order $h$ and therefore $C$ contains powers of $A$ up to order $m-1$ and derivatives of $A$ from order $1$ to order $m-1$. Passing now to the reduction to a first order system of size $m^2\times m^2$ of pseudo-differential operators, we easily see that the entries of the matrix $\mathcal{L}$ in \eqref{red_2} are obtained by the matrix $C$ and therefore from $A_hD_t^{q}A$ suitably reduced to order $0$, i.e., $\lara{\xi}^{-h}A_h(t,\xi)D_t^{q}A(t,\xi)\lara{\xi}^{-1}$. Since $\lara{\xi}^{-h}A_h(t,\xi)$ is bounded with respect to $t$ and $\xi$ and $1\le q\le m-1$  we conclude that the entries of the matrix $\mathcal{L}$ are of the desired type.
\end{proof}

The representation formula in Lemma \ref{chri_lemma} implies the following estimate.

\begin{proposition}
\label{prop_matrix_L}
The matrix $\mathcal{L}$ is bounded by the derivatives of the matrix $A_0=\lara{\xi}^{-1}A$ up to order $m-1$, i.e., there exists a constant $c>0$ such that 
\beq
\label{est_L}
\Vert \mathcal{L}(t,\xi)\Vert \le c\max_{k=1, \dots, m-1}\Vert D_t^k A_0(t,\xi)\Vert,
\eeq
for all $t\in [0,T]$ and $\xi\in\R^n$, where $\Vert\cdot\Vert$ denotes the standard matrix norm.
\end{proposition}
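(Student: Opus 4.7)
The plan is to read off the estimate directly from the explicit description of the entries of $\mathcal{L}$ given by Lemma \ref{chri_lemma}, using only that the factor $\lara{\xi}$ does not depend on $t$.

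First, by Lemma \ref{chri_lemma} every entry of $\mathcal{L}(t,\xi)$ is a finite sum
\[
\lara{\xi}^{-1}\sum_{k=1}^{m-1} c_{i(k),j(k)}(t)\,D_t^k a_{i(k),j(k)}(t,\xi),
\]
where the indices $i(k),j(k)\in\{1,\dots,m\}$ identify entries of the principal symbol $A(t,\xi)$, and the coefficients $c_{i(k),j(k)}(t)$ are uniformly bounded on $[0,T]$. Since $\lara{\xi}$ is $t$-independent, I can rewrite each term as
\[
\lara{\xi}^{-1} c_{i(k),j(k)}(t)\,D_t^k a_{i(k),j(k)}(t,\xi)
= c_{i(k),j(k)}(t)\,D_t^k\bigl(\lara{\xi}^{-1}a_{i(k),j(k)}(t,\xi)\bigr),
\]
and the quantity in the inner parentheses is precisely the $(i(k),j(k))$-entry of $A_0(t,\xi)=\lara{\xi}^{-1}A(t,\xi)$.

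Thus every entry of $\mathcal{L}(t,\xi)$ is bounded in absolute value by
\[
C\sum_{k=1}^{m-1}\bigl|D_t^k (A_0)_{i(k),j(k)}(t,\xi)\bigr|
\le C(m-1)\max_{k=1,\dots,m-1}\Vert D_t^k A_0(t,\xi)\Vert,
\]
where $C$ is the uniform bound on the coefficients $c_{i(k),j(k)}$ on $[0,T]$ and we used that an individual matrix entry is controlled by the matrix norm. Since $\mathcal{L}$ has a fixed (dimension-dependent) number of entries, the entrywise bound passes to a bound on $\Vert \mathcal{L}(t,\xi)\Vert$ by standard norm equivalence on finite-dimensional matrix spaces, producing the asserted constant $c>0$ and the estimate \eqref{est_L} uniformly for $t\in[0,T]$ and $\xi\in\R^n$.

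The proof is essentially routine once Lemma \ref{chri_lemma} is in hand; the only conceptual point is the simple but crucial observation that $\lara{\xi}^{-1}$ commutes with $D_t^k$, which is what converts derivatives of $A$ into derivatives of the rescaled matrix $A_0$. I do not foresee a genuine obstacle in this step.
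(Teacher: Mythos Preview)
Your argument is correct and is precisely the approach the paper takes: the paper states this proposition as an immediate consequence of Lemma~\ref{chri_lemma} without further details, and your write-up simply spells out the obvious step of commuting $\lara{\xi}^{-1}$ past $D_t^k$ and passing from entrywise bounds to the matrix norm.
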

We can now state our main result, which extends Theorem \ref{theo_main_H} to a general hyperbolic matrix $A$.
\begin{theorem}
\label{theo_main}
Let $D_t -A(t,D_x)$ be the matrix operator in \eqref{CP_intro}. Let the entries of $A(t,D_x)$ be analytic in $t\in(\delta, T'+\delta)$ and let the matrix $A(t,\xi)$  be (weakly) hyperbolic. Let $Q(t,\xi)=\{q_{ij}(t,\xi)\}_{i,j=1}^{m}$
be the symmetriser of the matrix $A_0(t,\xi)=\lara{\xi}^{-1}A(t,\xi)$,
$\Delta$ its determinant and $\psi(t,\xi)$ its check function. Let
$\Delta(\cdot,\xi)\not\equiv 0$ in $(\delta, T'+\delta)$ for all $\xi$ with $|\xi|\ge 1$ and let $[0,T]\subset(\delta, T'+\delta)$.
Assume that there exists a constant $C>0$ such that
\beq
\label{GR1m}
|\psi(t,\xi)|\le C\widetilde{\Delta}(t,\xi)
\eeq
and
\beq
\label{GRLevi}
\max_{k=1,\dots, m-1}\Vert \partial_t^k A_0(t,\xi)\Vert\le C({\Delta}(t,\xi)+\partial_t\Delta(t,\xi))
\eeq
for all $t\in[0,T]$ and $|\xi|\ge 1$. Then the Cauchy problem 
\beq
\label{CP_2}
\begin{split}
D_t u-A(t,D_x)u&=0,\quad t\in[0,T],\, x\in\R^n,\\
u(0,x)&=g(x),
\end{split}
\eeq
is $\Cinf$ well-posed and distributionally well-posed in $[0,T]$.   

\end{theorem}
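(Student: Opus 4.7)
The plan is to execute step three of the program from the introduction: combine the block-Sylvester reduction already carried out in Section \ref{SEC:mr} with the energy estimate underlying Theorem \ref{theo_main_H}, while controlling the lower-order matrix $\mathcal{L}$ through the Levi condition \eqref{GRLevi}. Since the first $m$ components of the unknown $U$ in \eqref{CP_syst_Syl} recover the original $u$, it suffices to prove $C^\infty$ and distributional well-posedness for the reduced $m^2\times m^2$ system. Moreover, the reduced system is hyperbolic with the same eigenvalues as $A_0=\lara{\xi}^{-1}A$ and still has analytic coefficients, so Theorem \ref{theo_yu_ultra} already supplies an ultradistributional solution; the task is to upgrade its regularity to $C^\infty$ (respectively to $\D'$).

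The principal part $\mathcal{A}$ is block-diagonal with $m$ identical $m\times m$ Sylvester blocks, each equal to the operator treated in Theorem \ref{theo_main_H}. Following the proof of that theorem (compare \cite{JT,GR:13}), I would work on the Fourier transform $V=\widehat U$ of $U$ and introduce, for an integer parameter $N\ge 1$, a hyperbolic and a Kovalevskian energy
\[
E_{\mathrm{h}}(t,\xi)=\lara{\mathcal{Q}(t,\xi)V,V}+\lara{\xi}^{-2N}|V|^{2},\qquad E_{\mathrm{k}}(t,\xi)=|V|^{2},
\]
where $\mathcal{Q}(t,\xi)$ is the block-diagonal symmetriser with blocks $Q(t,\xi)$. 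The phase space is partitioned into a Kovalevskian zone, where $\Delta(t,\xi)$ is smaller than a negative power of $\lara{\xi}$, and a hyperbolic zone given by its complement; analyticity of $\Delta(\cdot,\xi)$ in $t$ controls the total measure and the number of components of the Kovalevskian zone. On the Kovalevskian zone one uses $E_{\mathrm{k}}$ and absorbs the loss $\lara{\xi}$ from $\mathcal{A}$ into the small measure of the zone. On the hyperbolic zone one uses $E_{\mathrm{h}}$, and \eqref{GR1m} combined with the analysis of the generalised Hamilton--Cayley polynomial \eqref{HamCay} bounds $\lara{\partial_t\mathcal{Q}V,V}$ by a multiple of $(\widetilde\Delta/\Delta)\lara{\mathcal{Q}V,V}$, producing only a logarithmic loss in $\lara{\xi}$.

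The new ingredient, absent from Theorem \ref{theo_main_H}, is the contribution of $\mathcal{L}(t,D_x)U$ to $\frac{d}{dt}E_{\mathrm{h}}$. By Proposition \ref{prop_matrix_L} together with \eqref{GRLevi},
\[
\Vert\mathcal{L}(t,\xi)\Vert\le C\max_{1\le k\le m-1}\Vert\partial_t^k A_0(t,\xi)\Vert\le C'\bigl(\Delta(t,\xi)+\partial_t\Delta(t,\xi)\bigr),
\]
so this contribution is dominated by $C'(\Delta+\partial_t\Delta)|V|^{2}$. In the hyperbolic zone, $\Delta\,|V|^{2}$ is absorbed into $E_{\mathrm{h}}$, while the $\partial_t\Delta$ part yields, after time integration and a Gronwall argument exploiting the analyticity of $\Delta$, at most a polynomial loss $\lara{\xi}^{\kappa}$ in frequency. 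In the Kovalevskian zone the estimate $\Vert\mathcal{L}\Vert=O(1)$ together with the small time-measure of the zone again produces only a polynomial loss. Choosing $N$ large enough, all these losses are absorbed into the regularisation $\lara{\xi}^{-2N}|V|^{2}$, yielding a Sobolev estimate
\[
\Vert V(t,\cdot)\Vert_{H^{l-K}}\le C_l\Vert V(0,\cdot)\Vert_{H^{l}},\qquad t\in[0,T],
\]
for some fixed $K=K(m)$ independent of $l$.

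The main technical obstacle is precisely this calibration: the thresholds defining the two zones and the parameter $N$ must be chosen so that the losses coming from the Levi term $\mathcal{L}$, from the symmetriser estimate on the hyperbolic zone, and from the matching of the two energies at the interface all combine into a finite, data-independent $K$. Once the above estimate is established, $C^\infty$ well-posedness follows by letting $l\to+\infty$ along $C^\infty$ data, while $\D'$ well-posedness follows either by duality or by running the same argument in negative Sobolev scales. Finally, a bootstrap using $D_tu=A(t,D_x)u$ and iterated differentiation promotes the time regularity from $C^1$ to $C^\infty$, completing the proof.
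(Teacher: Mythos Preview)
Your proposal is correct and follows essentially the same route as the paper: block-Sylvester reduction, block-diagonal symmetriser $\mathcal{Q}$, a two-zone (Kovalevskian/hyperbolic) energy argument, hypothesis \eqref{GR1m} for the symmetriser term, and Proposition \ref{prop_matrix_L} together with \eqref{GRLevi} to absorb the $\mathcal{L}$-contribution into $(1+|\partial_t\Delta|/\Delta)E$.

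Two small differences worth noting. First, the paper's hyperbolic energy is simply $\lara{\mathcal{Q}V,V}$; the extra $\lara{\xi}^{-2N}|V|^2$ you add is harmless but unnecessary, since Lemma \ref{lemmaGR} already gives $c_1\Delta|V|^2\le\lara{\mathcal{Q}V,V}$, which is what actually turns $(\Delta+|\partial_t\Delta|)|V|^2$ into $(1+|\partial_t\Delta|/\Delta)E$. Second, the ``calibration'' you flag as the main technical obstacle is handled in the paper not by a threshold of the form $\Delta<\lara{\xi}^{-N}$ but by invoking the Jannelli--Taglialatela partitioning lemma (Lemma \ref{lemmaJT}): for each $\xi$ and each $\eps\in(0,\esp^{-1}]$ one gets a set $A_{\xi/|\xi|,\eps}\subset[0,T]$ of measure $\le\eps$, a uniform bound $p$ on the number of its components, a lower bound $\Delta\ge c_1\eps^{2q}\Vert\Delta\Vert_\infty$ off $A_{\xi/|\xi|,\eps}$, and the crucial integral bound $\int|\partial_t\Delta|/\Delta\le c_2\log(1/\eps)$. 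Setting $\eps=\esp^{-1}\lara{\xi}^{-1}$ at the end turns the combined Kovalevskian loss $\esp^{c\eps\lara{\xi}}$ and hyperbolic loss $\eps^{-pq}\esp^{C\log(1/\eps)}$ into a fixed polynomial $\lara{\xi}^{K}$, exactly the Sobolev shift you describe.
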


Before proceeding with the energy estimate which will allow us to prove Theorem \ref{theo_main} we focus on the case $m=2$. The following explanatory example will help the reader to better understand the meaning of the hypotheses \eqref{GR1m} and \eqref{GRLevi}.

\subsection{Example: the case $m=2$}
\label{SEC:ex}

We recall that if $\lambda_1,\lambda_2$ are the eigenvalues of $A$ then 
\[
Q(t,\xi)=\left(\begin{array}{cc}
                              \lara{\xi}^{-2}(\lambda_1^2+\lambda_2^2)(t,\xi) & -\lara{\xi}^{-1}(\lambda_1+\lambda_2)(t,\xi)\\
                             -\lara{\xi}^{-1}(\lambda_1+\lambda_2) (t,\xi)& 2 \\
                              \end{array}
                           \right),
\]
with
\[
\Delta=\lara{\xi}^{-2}(\lambda_1-\lambda_2)^2(t,\xi)
\]
and
\[
\wt{\Delta}=\lara{\xi}^{-2}(\lambda_1-\lambda_2)^2(t,\xi)+2\lara{\xi}^{-2}(\partial_t\lambda_1-\partial_t\lambda_2)^2(t,\xi),
\]
and 
\[
\psi(t,\xi)=\frac{1}{2}{\rm trace}(\partial_t Q\partial_t (Q^{{\rm co}}))(t,\xi)=-\lara{\xi}^2(\partial_t\lambda_1+\partial_t\lambda_2)^2(t,\xi).
\]
It follows that in this case the hypothesis \eqref{GR1m} looks like
\[
(\partial_t\lambda_1+\partial_t\lambda_2)^2(t,\xi)\le C((\lambda_1-\lambda_2)^2(t,\xi)+(\partial_t\lambda_1-\partial_t\lambda_2)^2(t,\xi))
\]
and \eqref{GRLevi} is given by  
\[
\Vert \partial_t A_0(t,\xi)\Vert\le C\lara{\xi}^{-2}((\lambda_1-\lambda_2)^2(t,\xi)+|(\lambda_1-\lambda_2)(t,\xi)(\partial_t\lambda_1-\partial_t\lambda_2)(t,\xi)|).
\]
Note that when the matrix $A$ is already in Sylvester form the formulation of the hypotheses \eqref{GR1m} and \eqref{GRLevi} is simplified and sometimes trivial. For instance, when 
\[
A(t,\xi)=\xi\left(\begin{array}{cc}
                             0 &1\\
                             a^2(t) & 0\\
                              \end{array}
                           \right),
\]
$\xi\in\R$, both the hypotheses \eqref{GR1m} and \eqref{GRLevi} are trivially satisfied. Indeed, $\lambda_1(t,\xi)=-|a(t,\xi)|$ and $\lambda_2(t,\xi)=|a(t)\xi|$. This implies \eqref{GR1m} because $\psi(t,\xi)\equiv 0$ and \eqref{GRLevi} becomes
\[
|2a(t)a'(t)|\le C (4a^2(t)+4|a(t)a'(t)|,
\]
which is trivially true.

 \section{Proof of the main theorem}
 \label{SEC:p}
 
The proof of Theorem \ref{theo_main} is partly based on the analogous result for scalar equations in \cite{GR:13} to which we will refer for the complete details of some steps of the proof. This is due to the reduction to block Sylvester form explained in the previous section which allows to define the block diagonal $m^2\times m^2$-symmetriser
 \[
 \mathcal{Q}=\left(\begin{array}{cccc}
 Q & 0 & \cdots & 0\\
 0 & Q & \cdots & 0\\
 \vdots & \vdots & \vdots &\vdots\\
 0 & \cdots & \cdots & Q
     \end{array}
                           \right),
 \]
 where $Q$ is the symmetriser of the matrix $A_0=\lara{\xi}^{-1}A$. Since the reduction to block Sylvester form transforms the original system 
 \[
 D_t u-A(t,D_x)u=0
 \]
into the system
\[
D_tU-\mathcal{A}(t,D_x)U+\mathcal{L}(t,D_x)U=0,
\] 
with $\mathcal{A}$ in the block Sylvester form,
our proof will need to take care of the lower order terms in $\mathcal{L}$ which do not enter into $\mathcal{A}$. This will be done by using the Levi conditions introduced in \cite {GR:13} and in particular by referring to 
Remark 4.8 in \cite{GR:13}.

We begin by recalling some technical lemmas which have been proved in \cite{GR:13} and \cite{JT} which will be useful for our analysis of systems as well.

\begin{lemma}
\label{lemmaGR}
Let $Q(t,\xi)$ be the symmetriser of the weakly hyperbolic matrix $A(t,\xi)$ defined above. Then, there exist two positive constants $c_1$ and $c_2$ such that
\[
c_1\det Q(t,\xi)|V|^2\le \lara{Q(t,\xi)V,V}\le c_2|V|^2
\]
holds for all $t\in[0,T]$, $\xi\in\R^n$ and $V\in\C^m$.
\end{lemma}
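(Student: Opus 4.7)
The plan is to exploit two basic linear-algebra facts about $Q(t,\xi)$: that its entries are uniformly bounded on $[0,T]\times\mathbb R^n$, and that it is positive semi-definite because of the weak hyperbolicity of $A$. These two ingredients should give the two inequalities essentially for free, once we express the spectral content of $Q$ correctly.

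For the upper bound, I would first observe that the entries $q_{ij}(t,\xi)$ of the standard symmetriser are fixed polynomials in the coefficients $h_1,\dots,h_m$ of the Sylvester matrix $A_0=\lara{\xi}^{-1}A$ (equivalently, polynomials in the normalised eigenvalues $\lara{\xi}^{-1}\lambda_j(t,\xi)$). Each $h_k$ is a symbol of order zero in $\xi$ and analytic, hence continuous, in $t$, so on the compact slab $[0,T]$ it is uniformly bounded in $(t,\xi)$ with $|\xi|\ge 1$; boundedness for $|\xi|\le 1$ is automatic by continuity. Consequently there exists $M>0$ such that $|q_{ij}(t,\xi)|\le M$ for all $(t,\xi)$, from which $\|Q(t,\xi)\|\le c_2$ follows and $\lara{Q(t,\xi)V,V}\le c_2|V|^2$ is immediate.

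For the lower bound I would diagonalise: since $Q$ is real symmetric and positive semi-definite (Proposition \ref{propJan} guarantees $\Delta_j\ge 0$ for $j=1,\dots,m$), its eigenvalues $0\le\mu_1(t,\xi)\le\mu_2(t,\xi)\le\cdots\le\mu_m(t,\xi)$ satisfy
\[
\lara{Q(t,\xi)V,V}\ge\mu_1(t,\xi)|V|^2\qquad\text{and}\qquad \det Q(t,\xi)=\mu_1(t,\xi)\cdots\mu_m(t,\xi).
\]
The upper bound from the previous paragraph gives $\mu_j(t,\xi)\le c_2$ for every $j$, so
\[
\mu_1(t,\xi)=\frac{\det Q(t,\xi)}{\mu_2(t,\xi)\cdots\mu_m(t,\xi)}\ge\frac{\det Q(t,\xi)}{c_2^{\,m-1}}.
\]
Setting $c_1:=c_2^{-(m-1)}$ yields $\lara{Q(t,\xi)V,V}\ge c_1\det Q(t,\xi)\,|V|^2$, which is the claimed inequality.

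There isn't really a hard step here; the only thing to be slightly careful about is making sure the boundedness of the entries is genuinely uniform in $\xi$, which is why the passage through the symbols of order zero (and through Sylvester normalisation by $\lara{\xi}$) is worth stating explicitly. Once that is in place, the rest is a one-line argument on eigenvalues.
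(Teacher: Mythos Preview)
Your argument is correct, and it is the standard one; the paper itself does not give a proof of this lemma but only recalls it from \cite{GR:13} and \cite{JT}, where the same eigenvalue argument is used. Two small points worth tightening. First, in the lower bound you write $\mu_1=\det Q/(\mu_2\cdots\mu_m)$, which is formally a $0/0$ when several eigenvalues vanish (as happens at points of multiplicity); it is cleaner, and avoids the case distinction, to write $\det Q=\mu_1\mu_2\cdots\mu_m\le \mu_1\,c_2^{\,m-1}$ directly, which gives $\mu_1\ge c_2^{-(m-1)}\det Q$ in all cases. Second, your appeal to Proposition~\ref{propJan} for positive semi-definiteness is slightly misplaced: nonnegativity of the \emph{leading} principal minors $\Delta_j$ does not by itself imply positive semi-definiteness; rather, you should cite the property stated just before Proposition~\ref{propJan}, namely that the standard symmetriser $Q$ is positive semi-definite precisely when $A_0$ is weakly hyperbolic.
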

 
\begin{lemma}
\label{lemmaEGR}
Let $Q(t,\xi)$ be the symmetriser of the matrix $A(t,\xi)$. Let $\Delta(t,\xi)=\det Q(t,\xi)$, $\wt{\Delta}(t,\xi)=\Delta(t,\xi)+(\partial_t \Delta(t,\xi))^2/\Delta(t,\xi)$, $\psi(t,\xi)$ the check function of $Q(t,\xi)$. Let $I$ be a closed interval of $\R$.
Then,
\beq
\label{E1}
\sqrt{\frac{\Delta(t,\xi)}{\wt{\Delta}(t,\xi)}}\frac{\lara{\partial_t Q(t,\xi)V,V}}{\lara{Q(t,\xi)V,V}}\in L^\infty (I\times \R^n\times\C^m\setminus  0)
\eeq
if and only if
\beq
\label{C1}
\frac{\psi(t,\xi)}{\wt{\Delta}(t,\xi)}\in L^\infty (I\times \R^n).
\eeq
\end{lemma}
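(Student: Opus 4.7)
The plan is to relate the matrix quotient appearing in \eqref{E1} to the roots $\tau_j(t,\xi)$ of the generalised Hamilton-Cayley polynomial \eqref{HamCay} of the pencil $(Q,\partial_t Q)$, and then use the explicit formula for $\sum_j \tau_j^2$ recalled in the excerpt to turn the bound on the quotient into a bound on $\psi/\wt{\Delta}$. The argument should work pointwise in $(t,\xi)$ and then be upgraded to an $L^\infty$ statement.

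First I would fix $(t,\xi)$ in the strictly hyperbolic set $\{\Delta(t,\xi)>0\}$, where $Q$ is positive definite, and simultaneously diagonalise $Q$ and $\partial_t Q$ by generalised eigenvectors $V_j$ solving $\partial_t Q\, V_j=\tau_j\, Q V_j$, $Q$-orthonormalised so that $\lara{QV_j,V_k}=\delta_{jk}$. Writing $V=\sum_j c_j V_j$ one finds
\[
\frac{\lara{\partial_t Q V,V}}{\lara{QV,V}}=\frac{\sum_j |c_j|^2\tau_j(t,\xi)}{\sum_j |c_j|^2},
\]
which is a convex combination of the $\tau_j$. Taking the sup over $V$ yields
\[
\sup_{V\in\C^m\setminus 0}\Bigl|\frac{\lara{\partial_t QV,V}}{\lara{QV,V}}\Bigr|=\max_{j}|\tau_j(t,\xi)|.
\]
Since $\max_j|\tau_j|^2$ and $\sum_j \tau_j^2$ differ at most by a factor of $m$, condition \eqref{E1} is equivalent to
\[
\frac{\Delta(t,\xi)}{\wt{\Delta}(t,\xi)}\sum_{j=1}^m \tau_j(t,\xi)^2\in L^\infty(I\times\R^n).
\]

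Second, I would substitute into this expression the identity displayed in the excerpt, namely $\sum_j\tau_j^2=(d_1/d_0)^2-2d_2/d_0$, with $d_0=\Delta$, $d_1=-\partial_t\Delta$ and $d_2=\psi$. This gives
\[
\frac{\Delta}{\wt{\Delta}}\sum_{j=1}^m \tau_j^2=\frac{(\partial_t\Delta)^2}{\Delta\,\wt{\Delta}}-2\,\frac{\psi}{\wt{\Delta}}.
\]
The key observation is that the first term on the right is automatically bounded: from the very definition $\wt{\Delta}=\Delta+(\partial_t\Delta)^2/\Delta$ one computes $\Delta\,\wt{\Delta}=\Delta^2+(\partial_t\Delta)^2\geq (\partial_t\Delta)^2$, so
\[
0\leq \frac{(\partial_t\Delta)^2}{\Delta\,\wt{\Delta}}\leq 1
\]
uniformly. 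Consequently the $L^\infty$ boundedness of the left-hand side is equivalent to the $L^\infty$ boundedness of $\psi/\wt{\Delta}$, which is precisely \eqref{C1}, proving both implications at once.

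The principal obstacle is the treatment of the degenerate set $\{\Delta(t,\xi)=0\}$, where $Q$ is only weakly positive definite, the simultaneous diagonalisation above breaks down, and the polynomial $\det(\tau Q-\partial_t Q)$ drops in degree (some $\tau_j$ escape to infinity). I would handle this by exploiting the prefactor $\sqrt{\Delta/\wt{\Delta}}$, which vanishes exactly on that set at a rate controlled by $\partial_t\Delta$, and by approximating: on the open set $\{\Delta>0\}$ the identity above holds pointwise, and because by hypothesis $\Delta(\cdot,\xi)\not\equiv 0$ and $Q(\cdot,\xi)$ is analytic in $t$, the zero set of $\Delta$ is locally a finite union of points for each $\xi$, so an essential-supremum argument together with continuity of $\psi/\wt\Delta$ and of $(\partial_t\Delta)^2/(\Delta\wt\Delta)$ away from this set transfers the equivalence to all of $I\times\R^n$.
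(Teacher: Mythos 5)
Your proof is correct and follows exactly the route the paper sets up in Section~2.2: reduce the Rayleigh quotient for the pencil $(Q,\partial_t Q)$ to the roots $\tau_j$ of the generalised Hamilton--Cayley polynomial via $\sup_V\bigl|\lara{\partial_t QV,V}/\lara{QV,V}\bigr|=\max_j|\tau_j|$, invoke the displayed identity $\sum_j\tau_j^2=(d_1/d_0)^2-2d_2/d_0$ with $d_0=\Delta$, $d_1=-\partial_t\Delta$, $d_2=\psi$, and note the automatic bound $(\partial_t\Delta)^2/(\Delta\wt\Delta)\le 1$ coming from the definition of $\wt\Delta$. The paper itself defers the proof to \cite{GR:13} and \cite{JT}, but your argument is precisely the one those references (and the paper's own preamble around \eqref{HamCay}) indicate, including the essential-supremum treatment of the measure-zero set $\{\Delta=0\}$.
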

\begin{remark}
\label{rem_lemmas}
It is clear that Lemma \ref{lemmaGR} and Lemma \ref{lemmaEGR} are valid also for the block diagonal matrix $\mathcal{A}$ and the corresponding symmetriser $\mathcal{Q}$ as defined at the beginning of this section.
\end{remark}

\begin{lemma}
\label{lemmaJT}
 Let $\Delta(t,\xi)$ be the determinant of $Q(t,\xi)$ defined as above. Suppose that $\Delta(t,\xi)\not\equiv 0$. Then,
 \begin{itemize}
 \item[(i)] there exists $X\subset\mathbb{S}^{n-1}$ such that $\Delta(t,\xi)\not\equiv 0$ in $(\delta, T'+\delta)$ for any ${\xi}\in X$ and the set $\mathbb{S}^{n-1}\setminus X$ is negligible with respect to the Hausdorff $(n-1)$-measure;
 \item[(ii)] for any $[0,T]\subset(\delta, T'+\delta)$ there exist $c_1,c_2>0$ and $p,q\in\N$ such that for any $\xi\in X$ and any $\eps\in(0,\esp^{-1}]$ there exists $A_{\xi,\eps}\subset[a,b]$ such that:
\begin{itemize}
\item[-] $A_{\xi,\eps}$ is a union of at most $p$ disjoint intervals,
\item[-] $meas(A_{\xi,\eps})\le\eps$,
\item[-] $\min_{t\in[0,T]\setminus A_{\xi,\eps}} \Delta(t,\xi)\ge c_1\eps^{2q}\Vert\Delta(\cdot,\xi)\Vert_{L^\infty([0,T])}$,
\item[-]
\[
\int_{t\in[0,T]\setminus A_{\xi,\eps}}\frac{|\partial_t\Delta(t,\xi)|}{\Delta(t,\xi)}\, dt\le c_2\log\frac{1}{\eps}.
\]
\end{itemize}

\end{itemize}
\end{lemma}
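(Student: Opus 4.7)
The plan is to start from the ultradistributional solution $U$ of the block Sylvester system
\[
D_tU-\mathcal{A}(t,D_x)U+\mathcal{L}(t,D_x)U=0
\]
whose existence is guaranteed by Theorem \ref{theo_yu_ultra} applied to the original analytic system (so $\alpha=1$ and we may take any admissible $s>1$), and to upgrade its regularity by deriving polynomial-in-$\lara{\xi}$ bounds on $V(t,\xi):=\widehat{U}(t,\xi)$. Taking the Fourier transform in $x$ reduces the task to the pointwise-in-$\xi$ ODE
\[
D_tV=\mathcal{A}(t,\xi)V-\mathcal{L}(t,\xi)V,
\]
for which I will combine a hyperbolic energy built from the block symmetriser $\mathcal{Q}=\mathrm{diag}(Q,\dots,Q)$ with a Kovalevskian one, precisely in the spirit of Remark 4.8 in \cite{GR:13}, the novelty being the systematic treatment of the lower order matrix $\mathcal{L}$.

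The two energies are
\[
E_h(t,\xi):=\lara{\mathcal{Q}(t,\xi)V(t,\xi),V(t,\xi)},\qquad E_K(t,\xi):=|V(t,\xi)|^2.
\]
Differentiating $E_h$ and using the identity $QA_0-A_0^*Q=0$ (and hence $\mathcal{Q}\mathcal{A}-\mathcal{A}^*\mathcal{Q}=0$ block-by-block) makes the principal-part contribution vanish, leaving only
\[
\partial_tE_h=\lara{\partial_t\mathcal{Q}\,V,V}+2\Im\lara{\mathcal{Q}\mathcal{L}V,V}.
\]
Lemma \ref{lemmaEGR} together with hypothesis \eqref{GR1m} gives the pointwise bound $\lara{\partial_t\mathcal{Q}V,V}\lesssim \sqrt{\wt\Delta/\Delta}\;E_h$, while Proposition \ref{prop_matrix_L} combined with the new Levi-type assumption \eqref{GRLevi} yields $\|\mathcal{L}(t,\xi)\|\le C(\Delta(t,\xi)+\partial_t\Delta(t,\xi))$, so that by Lemma \ref{lemmaGR}
\[
|\lara{\mathcal{Q}\mathcal{L}V,V}|\lesssim\bigl(\Delta+\partial_t\Delta\bigr)\,E_h/\sqrt{\Delta}.
\]
For $E_K$ one simply has $\partial_tE_K\lesssim(\|\mathcal{A}(t,\xi)\|+\|\mathcal{L}(t,\xi)\|)E_K$ with $\|\mathcal{A}\|\lesssim\lara{\xi}$.

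Next I invoke Lemma \ref{lemmaJT} for each $\xi\in X$ with $|\xi|\ge 1$, splitting $[0,T]=A_{\xi,\eps}\cup([0,T]\setminus A_{\xi,\eps})$ with $\eps$ to be chosen as a negative power of $\lara{\xi}$. On the bad set $A_{\xi,\eps}$ I propagate $V$ with the Kovalevskian estimate, whose total contribution is bounded by $\exp(C\eps\lara{\xi})$ times the boundary value, since $\mathrm{meas}(A_{\xi,\eps})\le\eps$ and $\|\mathcal{L}\|$ is globally bounded in $t,\xi$. On the good set $[0,T]\setminus A_{\xi,\eps}$ I use the hyperbolic energy: Lemma \ref{lemmaJT}(ii) produces the lower bound $\Delta\ge c_1\eps^{2q}\|\Delta(\cdot,\xi)\|_{L^\infty}$, hence $\wt\Delta/\Delta\le 1+|\partial_t\Delta|^2/\Delta^2$ becomes integrable after taking the square root, and the crucial logarithmic bound
\[
\int_{[0,T]\setminus A_{\xi,\eps}}\frac{|\partial_t\Delta(t,\xi)|}{\Delta(t,\xi)}\,dt\le c_2\log(1/\eps)
\]
controls both the $\partial_t\mathcal{Q}$-term and the $\partial_t\Delta$-part of $\mathcal{L}$. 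The $\Delta$-part of $\mathcal{L}$, once divided by $\sqrt{\Delta}\ge c_1^{1/2}\eps^q\|\Delta\|_{L^\infty}^{1/2}$, integrates to something of order $\eps^{-q}$. Optimising the choice $\eps\simeq\lara{\xi}^{-\gamma}$ for a suitable $\gamma>0$ (so that $\eps\lara{\xi}\to 0$ while keeping the good-set contribution polynomial) yields an estimate of the form
\[
|V(t,\xi)|\le C\lara{\xi}^{N}\,|V(0,\xi)|,\qquad t\in[0,T],\;|\xi|\ge 1,
\]
for some $N$ independent of $\xi$. This estimate is exactly what upgrades the ultradistributional solution to a $C^\infty$-solution when the data is in $C^\infty$, and to a distributional solution when the data is in $\D'$; uniqueness follows by running the same estimate on differences.

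The main obstacle I expect is the control of the lower order matrix $\mathcal{L}$, which is absent in the scalar Sylvester setting of Theorem \ref{theo_main_H}. Two features of \eqref{GRLevi} are essential here: the factor $\Delta$ on the right-hand side, which after dividing by $\sqrt{\Delta}$ in the Cauchy--Schwarz step for $\lara{\mathcal{Q}\mathcal{L}V,V}$, leaves only $\sqrt{\Delta}$ and therefore remains integrable in $t$; and the factor $\partial_t\Delta$, which is handled exactly through the logarithmic bound of Lemma \ref{lemmaJT}(ii). Keeping the two-region splitting compatible with both energies, while making sure the powers of $\eps$ (and hence of $\lara{\xi}$) accumulated from $\mathcal{L}$ do not outweigh the gain coming from the good-set measure, is the delicate bookkeeping step; the analogous bookkeeping for scalar equations in \cite{GR:13} will serve as the blueprint.
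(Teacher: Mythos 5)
You have proved the wrong statement. The statement you were asked to address is Lemma \ref{lemmaJT}, which is a quantitative structure result for the zero set and the logarithmic derivative of the real-analytic function $t\mapsto\Delta(t,\xi)$: it asserts (i) that for a.e.\ direction $\xi\in\mathbb{S}^{n-1}$ the restricted function $\Delta(\cdot,\xi)$ does not vanish identically, and (ii) that for each such $\xi$ and each $\eps$ one can remove a set $A_{\xi,\eps}$ of measure at most $\eps$, consisting of at most $p$ intervals, outside of which $\Delta$ is bounded below by $c_1\eps^{2q}\|\Delta(\cdot,\xi)\|_\infty$ and $\int|\partial_t\Delta|/\Delta\,dt\le c_2\log(1/\eps)$, with $p,q,c_1,c_2$ uniform in $\xi$. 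What you wrote is instead a proof sketch for Theorem \ref{theo_main}, which \emph{uses} Lemma \ref{lemmaJT} as a black box. Citing the lemma inside a purported proof of the lemma is circular, so as a proof of the statement in question the proposal has no content.

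For the record, the paper itself does not reprove Lemma \ref{lemmaJT}; it explicitly recalls it as a result established in the scalar setting of \cite{JT} (and revisited in \cite{GR:13}), since $\Delta(\cdot,\xi)$ has exactly the same analytic-in-$t$ structure there. A genuine proof of the lemma would have to use the analyticity of $\Delta$ in $t$: item (i) follows by expanding $\Delta(t,\xi)$ in $t$ with coefficients that are homogeneous of degree $0$ in $\xi$ and observing that the set of directions where all coefficients vanish is a proper analytic subset of the sphere; item (ii) rests on the fact that a nonzero analytic function on a compact interval has finitely many zeros of bounded total order, together with a compactness argument in $\xi\in X$ and a Bernstein-type or Łojasiewicz-type inequality to get the uniform exponent $q$ and the uniform number of intervals $p$, from which the logarithmic bound on $\int|\partial_t\Delta|/\Delta$ over the complement of a small neighbourhood of the zeros follows by direct computation. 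None of this appears in your proposal.

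As a separate matter: what you wrote is a reasonable, if condensed, account of how the paper proves Theorem \ref{theo_main} (same block symmetriser, same two-energy splitting over $A_{\xi,\eps}$ and its complement, same use of \eqref{GR1m}, \eqref{GRLevi}, Lemma \ref{lemmaGR} and Lemma \ref{lemmaEGR}, and the final choice $\eps\simeq\lara{\xi}^{-1}$). But that is not the task here.
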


To prove the $\Cinf$ well-posedness of the Cauchy problem \eqref{CP_intro} in the reduced form \eqref{CP_syst_Syl} we first apply the Fourier transform in $x$ and work on the equivalent system
\beq
\label{CP_syst_F}
\begin{split}
D_tV-\mathcal{A}(t,\xi)V+\mathcal{L}(t,\xi)V&=0,\\
V_{t=0} &= \{D_t^{j-1}\lara{\xi}^{m-j}\widehat{g_0}(\xi)\}_{j=1,2,\dots,m},
\end{split}
\eeq
where $V=\mathcal{F}_{x\to\xi}U(t,\cdot)(\xi)$. We then consider the energy
\[
E(t,\xi)=\begin{cases}
|V(t,\xi)|^2 & \text{for $t\in A_{\xi/|\xi|,\eps}$ and $\xi/|\xi|\in X$},\\
\lara{\mathcal{Q}(t,\xi)V(t,\xi),V(t,\xi)} & \text{for $t\in[a,b]\setminus A_{\xi/|\xi|,\eps}$ and $\xi/|\xi|\in X$,}
\end{cases}
\]
defined for $t\in[0,T]$, $\xi\in\R^n$ with $\xi/|\xi|\in X$, and $\eps\in(0,\esp^{-1}]$.
Note that $\Delta(t,\xi)>0$ when $t\in[0,T]\setminus A_{\xi/|\xi|,\eps}$ and $\xi/|\xi|\in X$,
and, thanks to Lemma \ref{lemmaJT},  $[0,T]\setminus A_{\xi/|\xi|,\eps}$ is a finite union of at most $p$ closed intervals $[c_i,d_i]$.
Moreover, the set $A_{\xi/|\xi|,\eps}$ is a finite union of open intervals whose total length does not
exceed $\eps$. 

We now define a Kovalevskian energy on $A_{\xi/|\xi|,\eps}$ and a hyperbolic energy on the complement.   

\subsection{The Kovalevskian energy}
Let $t\in[t',t^{''}]\subseteq A_{\xi/|\xi|,\eps}$ and $\xi/|\xi|\in X$. Hence
\[
\begin{split}
\partial_t E(t,\xi)&= 2\Re\lara{V(t,\xi),\partial_t V(t,\xi)}\\
& = 2\Re\lara{V(t,\xi),i\lara{\xi}\mathcal{A}(t,\xi)V(t,\xi)+i\mathcal{L}(t,\xi)V(t,\xi)}\le 2(c_\mathcal{A}\lara{\xi}+c_\mathcal{L})E(t,\xi).
\end{split}
\]
By Gronwall's Lemma on $[t',t^{''}]$ we get
\beq
\label{estEk}
|V(t,\xi)|\le \esp^{(c_\mathcal{A}\lara{\xi}+c_\mathcal{L})(t-t')}|V(t',\xi)|\le c\,\esp^{c\lara{\xi}(t-t')}|V(t',\xi)|.
\eeq

\subsection{The hyperbolic energy}
Let us work on any subinterval $[c_i,d_i]$ of $[0,T]\setminus A_{\xi/|\xi|,\eps}$. Assuming $\xi/|\xi|\in X$,
we have that $\Delta(t,\xi)>0$ on $[c_i,d_i]$. By definition of the symmetriser, we have that 
\[
\begin{split}
&\partial_t E(t,\xi)=\lara{\partial_t \mathcal{Q}(t,\xi)V(t,\xi),V(t,\xi)}\\
&+\lara{\mathcal{Q}(t,\xi)\partial_tV(t,\xi),V(t,\xi)}+\lara{\mathcal{Q}(t,\xi)V(t,\xi),\partial_t V(t,\xi)}\\
&=\frac{\lara{\partial_t \mathcal{Q}(t,\xi)V(t,\xi),V(t,\xi)}}{\lara{\mathcal{Q}(t,\xi)V,V}}E(t,\xi)+\lara{\mathcal{Q}(t,\xi)(i\lara{\xi}\mathcal{A}(t,\xi)+i\mathcal{L}(t,\xi))V(t,\xi),V(t,\xi)}\\
&+\lara{\mathcal{Q}(t,\xi)V(t,\xi),(i\lara{\xi}\mathcal{A}(t,\xi)+i\mathcal{L}(t,\xi))V(t,\xi)}\\
&=\frac{\lara{\partial_t \mathcal{Q}(t,\xi)V(t,\xi),V(t,\xi)}}{\lara{\mathcal{Q}(t,\xi)V,V}}E(t,\xi)+i\lara{(\mathcal{Q}(t,\xi)\mathcal{L}(t,\xi)-\mathcal{L}^\ast(t,\xi)\mathcal{Q}(t,\xi))V(t,\xi),V(t,\xi)}.
\end{split}
\]
Now, by Lemma \ref{lemmaGR} and Lemma \ref{lemmaEGR}, the hypothesis \eqref{GR1m} implies that the quantity $$\frac{\lara{\partial_t \mathcal{Q}(t,\xi)V(t,\xi),V(t,\xi)}}{\lara{\mathcal{Q}(t,\xi)V,V}}$$ is bounded by $$\sqrt{\frac{\wt{\Delta}(t,\xi)}{\Delta(t,\xi)}}.$$ Hence, by definition of $\widetilde{\Delta}$ we conclude that
\begin{multline}
\label{estE1}
\partial_t E(t,\xi) \\ 
\le C\sqrt{\frac{\wt{\Delta}(t,\xi)}{\Delta(t,\xi)}}E(t,\xi)+|\lara{(\mathcal{Q}(t,\xi)\mathcal{L}(t,\xi)-\mathcal{L}^\ast(t,\xi)\mathcal{Q}(t,\xi))V(t,\xi),V(t,\xi)}|\\
\le  C\biggl(1+\frac{|\partial_t\Delta(t,\xi)|}{\Delta(t,\xi)}\biggr)E(t,\xi)+|\lara{(\mathcal{Q}(t,\xi)\mathcal{L}(t,\xi)-\mathcal{L}^\ast(t,\xi)\mathcal{Q}(t,\xi))V(t,\xi),V(t,\xi)}|.
\end{multline}
We now have to deal with the lower order terms. By arguing as in Remark 4.8 in \cite{GR:13} we can estimate
\[
|\lara{(\mathcal{Q}(t,\xi)\mathcal{L}(t,\xi)-\mathcal{L}^\ast(t,\xi)\mathcal{Q}(t,\xi))V(t,\xi),V(t,\xi)}|\le  c\Vert \mathcal{L}\Vert|V|^2+c\Vert \mathcal{L}^\ast\Vert |V|^2.
\]
The hypothesis \eqref{GRLevi} combined with Proposition \ref{prop_matrix_L} implies that both $\Vert\mathcal{L}\Vert$ and $\Vert \mathcal{L}^\ast\Vert$ are bounded by
\[
\Delta(t,\xi)+|\partial_t\Delta(t,\xi)|.
\]
Hence, by applying Lemma \ref{lemmaGR} we arrive at the estimate
\begin{multline}
\label{estE2}
|\lara{(\mathcal{Q}(t,\xi)\mathcal{L}(t,\xi)-\mathcal{L}^\ast(t,\xi)\mathcal{Q}(t,\xi))V(t,\xi),V(t,\xi)}|
\\ \le C'\biggl(\frac{\Delta(t,\xi)+|\partial_t\Delta(t,\xi)|}{\Delta(t,\xi)}\biggr)E(t,\xi)\\
\le  C'\biggl(1+\frac{|\partial_t\Delta(t,\xi)|}{\Delta(t,\xi)}\biggr)E(t,\xi).
\end{multline}
Finally, by combining \eqref{estE1} and \eqref{estE2} we obtain the final energy estimate
\beq
\label{estE3}
\partial_t E(t,\xi)\le  c'\biggl(1+\frac{|\partial_t\Delta(t,\xi)|}{\Delta(t,\xi)}\biggr)E(t,\xi).
\eeq

\subsection{Completion of the proof}
 
We are now ready to prove Theorem \ref{theo_main}.
\begin{proof}[Proof of Theorem \ref{theo_main}]
We begin by observing that, by the finite speed of propagation for hyperbolic equations, we can always assume that the Cauchy data in \eqref{CP_intro} are compactly supported. We refer to the Kovalevskian energy and the hyperbolic energy introduced above. We note that in the energies under consideration we can assume $|\xi|\ge 1$ since the continuity of $V(t,\xi)$ in $\xi$ implies that both energies are bounded for $|\xi|\le 1$. Let us consider the hyperbolic energy on the interval $[c_i,d_i]$. By Gronwall's Lemma on $[c_i,d_i]$ we get the inequality
\beq
\label{estEhyp}
E(t,\xi)\le \esp^{c(d_i-c_i)}\exp\biggl(c\int_{c_i}^t\frac{|\partial_s\Delta(s,\xi)|}{\Delta(s,\xi)}\,ds\biggr)E(c_i,\xi).
\eeq
By Lemma \ref{lemmaJT}, (ii), we have
\[
\Delta(t,\xi)\ge \min_{s\in [a,b]\setminus A_{\xi,\eps}}\Delta(s,\xi)\ge c_1\eps^{2q}\Vert \Delta(\cdot,\xi)\Vert_{L^\infty([a,b])},
\]
for all $t\in[c_i,d_i]$. Hence, applying Lemma \ref{lemmaGR} to \eqref{estEhyp}
we have that there exists a constant $C>0$ such that
\begin{multline}
\label{estEhyp2}
|V(t,\xi)|^2\le C\frac{1}{\eps^{2q}\Vert \Delta(\cdot,\xi)\Vert_{L^\infty([a,b])}}\exp\biggl(\int_{c_i}^t
\frac{|\partial_s\Delta(s,\xi)|}{\Delta(s,\xi)}\,ds\biggr)|V(c_i,\xi)|^2,\\
\le C\frac{1}{\eps^{2q}\Vert \Delta(\cdot,\xi)\Vert_{L^\infty([a,b])}}\esp^{C\log(1/\eps)}|V(c_i,\xi)|^2,
\end{multline}
for all $t\in[c_i,d_i]$ and for $|\xi|\ge 1$. Note that in the estimate above we have used Lemma \ref{lemmaJT}, (ii), in the last step.
Since the number of the closed interval $[c_i,d_i]$ does not exceed $p$, a combination of the Kovalevskian energy \eqref{estEk} with the hyperbolic energy \eqref{estEhyp2} leads to\[
|V(b,\xi)|\le C\frac{1}{\eps^{pq}\Vert \Delta(\cdot,\xi)\Vert^{p/2}_{L^\infty([a,b])}}\esp^{C(\log(1/\eps)+\eps|\xi|)}|V(a,\xi)|,
\]
for $|\xi|\ge 1$. At this point setting $\eps=\esp^{-1}\lara{\xi}^{-1}$ we have that there exist constants $C'>0$ and $\kappa\in\N$ such that
\beq
\label{last-est}
|V(b,\xi)|\le C'\lara{\xi}^{pq+\kappa}|V(a,\xi)|,
\eeq
for $|\xi|\ge 1$. This proves the $\Cinf$ well-posedness of the Cauchy problem \eqref{CP_intro}. Similarly, \eqref{last-est} implies the well-posedness of \eqref{CP_intro} in $\mathcal{D}'(\R^n)$.
\end{proof}


\begin{thebibliography}{99}
















\bibitem{dAKS:04}
{P. d'Ancona, T. Kinoshita and S. Spagnolo}.
\newblock{Weakly hyperbolic systems with H\"older continuous coefficients.}
\newblock{\em J. Differential Equations}, {203(1)}, 64Ð81, 2004.

\bibitem{dAKS:08}
{P. d'Ancona, T. Kinoshita and S. Spagnolo}.
\newblock On the 2 by 2 weakly hyperbolic systems.
\newblock{\em Osaka J. Math.}, {45(4)}, 921Ð939, 2008.

\bibitem{DS}
P.~D'Ancona and S.~Spagnolo.
\newblock Quasi-symmetrization of hyperbolic systems and propagation of the
  analytic regularity.
\newblock {\em Boll. Unione Mat. Ital. Sez. B Artic. Ric. Mat. (8)},
  1(1):169--185, 1998.


\bibitem{Gar:15}
C.~Garetto.
\newblock On hyperbolic equations and systems with non-regular time dependent coefficients.
\newblock http://arxiv.org/abs/1504.03716. To appear in {\em J. Differential Equations}, 2015.



\bibitem{GR:11}
C.~Garetto and M.~Ruzhansky.
\newblock On the well-posedness of weakly hyperbolic equations with
  time-dependent coefficients.
\newblock {\em J. Differential Equations}, 253(5):1317--1340, 2012.

\bibitem{GR:12}
C.~Garetto and M.~Ruzhansky.
\newblock Weakly hyperbolic equations with non-analytic coefficients and lower
  order terms.
\newblock {\em Math. Ann.}, 357(2):401--440, 2013.

\bibitem{GR:13}
C.~Garetto and M.~Ruzhansky.
\newblock A note on weakly hyperbolic equations with analytic principal part.
\newblock {\em J. Math. Anal. Appl.}, 412(1):1--14, 2014.

\bibitem{GR:15}
C.~Garetto and M.~Ruzhansky.
\newblock Hyperbolic  second order equations with non-regular time dependent coefficients
{\em Arch. Ration. Mech. Anal.}, 217, 113--154, 2015.




\bibitem{GrR:13}
T. Gramchev and M. Ruzhansky.
\newblock
Cauchy problem for some $2\times 2$ hyperbolic systems of pseudo-differential equations with nondiagonalisable principal part. 
Studies in phase space analysis with applications to PDEs, 129--145, 
Progr. Nonlinear Differential Equations Appl., 84, Birkh\"auser/Springer, New York, 2013. 

\bibitem{J:89}
E. Jannelli.
On the symmetrization of the principal symbol of hyperbolic equations.
{\em Comm. Partial Differential Equations}, 14 (1989), 1617--1634.

\bibitem{J:09}
E. Jannelli.
\newblock The hyperbolic symmetrizer: theory and applications.
\newblock{\em in Advances in Phase Space Analysis of PDEs}, Birkh\"auser, 78, 113--139, 2009.

\bibitem{JT}
E. Jannelli and G. Taglialatela.
\newblock Homogeneous weakly hyperbolic equations with time dependent analytic coefficients.
\newblock{\em J. Differential Equations}, 251 (2011), 995--1029.


\bibitem{Kaj:83}
{ K. Kajitani.}
\newblock  Local solutions of Cauchy problem for nonlinear hyperbolic systems in Gevrey classes.  
{\em Hokkaido Math. J.}, {23(3)}, 599--616, 1983.


\bibitem{KY:06}
{ K. Kajitani and Y. Yuzawa.}
\newblock  The Cauchy problem for hyperbolic systems with H\"older continuous coefficients with respect to the time variable.
{\em Ann. Sc. Norm. Super. Pisa Cl. Sci.}, {5(4)}, 465--482, 2006.

\bibitem{KR:07}
I. Kamotski and M. Ruzhansky. 
\newblock 
Regularity properties, representation of solutions and spectral asymptotics of systems with multiplicities.
{\em Comm. Partial Differential Equations}, 32, 1--35, 2007.









\bibitem{RW:11}
M. Ruzhansky and J. Wirth.
\newblock
Dispersive estimates for hyperbolic systems with time-dependent coefficients.
{\em J. Differential Equations}, 251, 941--969, 2011.

\bibitem{RW:14}
M. Ruzhansky and J. Wirth.
\newblock
Asymptotic behaviour of solutions to hyperbolic equations and systems. Variable Lebesgue spaces and hyperbolic systems, 91--169, Adv. Courses Math. CRM Barcelona, Birkh\"auser/Springer, Basel, 2014.




\bibitem{Yu:05}
{Y. Yuzawa}.
\newblock The Cauchy problem for hyperbolic systems with H\"older continuous coefficients with respect to time.
{\em J. Differential Equations}, {219(2)}, 363--374, 2005.

\end{thebibliography}
\end{document}